\numberwithin{equation}{section}
\newcommand{\p}{\partial}
 \renewcommand{\leq}{\leqslant}
\renewcommand{\geq}{\geqslant}
\renewcommand{\O}{\Omega}
\renewcommand{\div}{\operatorname{div}}
 \newtheorem{theorem}{{\bf Theorem}}[section]
\newtheorem{definition}[theorem]{Definition}
\newtheorem{lemma}[theorem]{{\bf Lemma}}
\newtheorem{prop}[theorem]{{\bf Proposition}}
\newtheorem{remark}[theorem]{{\bf Remark}}
\newcommand{\tr}{\operatorname{tr}}
\renewcommand{\a}{\alpha}
 \renewcommand{\b}{\beta}
\newcommand{\weight}[1]{\langle #1\rangle}
\newcommand{\sd}{\, \mathrm{d}}
\newcommand{\R}{\mathbb{R}}
\newcommand{\M}{\mathbb{R}}
\renewcommand{\S}{\mathbb{S}}
\renewcommand{\bar}{\overline}
\renewcommand{\tilde}{\widetilde}
\def\({\left(}
\def\){\right)}
\def\l|{\left|}
\def\r|{\right|}
\def\grad{\nabla}
\DeclareMathOperator{\diverg}{div}
\newcommand{\Lsym}[1]{D(#1)}  % symmetric part of velocity gradient L
\newcommand{\Lskew}[1]{W(#1)}
\def\I{\mathbb{I}_d}  % need to agree on a good symbol here
\def\calF{\mathcal{F}}
\newcommand{\N}{\ensuremath{\mathbb{N}}}
\newcommand{\barg}[1]{\bigl(#1\bigr)}
\newcommand{\Barg}[1]{\Bigl(#1\Bigr)}
\newcommand{\bset}[1]{\bigl\{#1\bigr\}}
\newcommand{\norm}[1]{\|#1\|}
\newcommand{\bnorm}[1]{\bigl\|#1\bigr\|}
\newcommand{\bbar}[1]{\bigl|#1\bigr|}
\newcommand{\bsqb}[1]{\bigl[#1\bigr]}
\newcommand{\scp}[2]{\langle #1, #2\rangle}
\newcommand{\bscp}[2]{\bigl< #1, #2\bigr>}
\newcommand{\seqn}[1]{(#1_n)_{n\in \N}}
\newcommand{\dv}[1]{\,{\rm d}#1}
\newcommand{\bref}[1]{(\ref{#1})}
\newcommand{\ts}[1]{\Omega_{#1}}  %% definition of time-space cylinder
\def\weakly{\rightharpoonup}
\def\weaklystar{ \stackrel{\ast}{\rightharpoonup}}
\def\BXzeroR{{B}_{X_0}(0,R)}
\def\BXzeroRone{{B}_{X_0}(0,R_1)}
\def\VWloka{\mathbb{V}}  %% YNL uses $V$ and $V(\Omega)$.
\def\HWloka{\mathbb{H}}  %% YNL uses $V$ and $V(\Omega)$.
\def\SonetwoWloka{H^{1}}
\def\StwotwoWloka{H^{2}}
\def\VNS{H^1_{0,\sigma}}
\def\HNSO{L^2_\sigma(\Omega)}
\def\VNSO{H^1_{0,\sigma}(\Omega)}
\def\VNSprime{H^{-1}_{\sigma}}
\def\VNSprimeO{H^{-1}_{\sigma}(\Omega)}
\def\HNSObb{(\HNSO)}
\def\Zspace{Z}
\def\tildeG{{G}}
\def\tildeL{\tilde{L}}
\def\uh{u_h}
\def\uhi{u_{hi}}
\def\uhone{u_{h1}}
\def\uhtwo{u_{h2}}
\def\Qh{Q_h}
\def\Qhi{Q_{hi}}
\def\Qhone{Q_{h1}}
\def\Qhtwo{Q_{h2}}
\def\CR{C_{\mathcal{R}}}     % why this funny index?
\newcommand{\dt}[1]{\partial_t #1} % use this for all t derivatives
\newcommand{\dtind}[2]{\partial_t #1_{#2}}  %% advanced technology for indices
\newcommand{\dtt}[1]{\partial_{tt}#1} % use this for all t derivatives
\begin{document}
\title[Beris--Edwards model]{Well-posedness of a fully-coupled Navier-Stokes/Q-tensor system with inhomogeneous boundary data}

\author{Helmut Abels, Georg Dolzmann, and YuNing Liu}
% \author{}
% \author{}
\address{Fakult\"at f\"ur Mathematik, Universit\"at Regensburg, 93040 Regensburg, Germany.}
\email{helmut.abels@mathematik.uni-regensburg.de}
%
% I think we put at most the email of the corresponding author.
%
 \email{georg.dolzmann@mathematik.uni-regensburg.de}
 \email{yuning.liu@mathematik.uni-regensburg.de}

\keywords{Beris-Edwards model, liquid crystals, Navier-Stokes equations, Q-tensor,
strong-in-time solutions}

\subjclass[2010]{Primary 35Q35; %% Other equations arising in fluid mechanics 
Secondary: 35Q30, %% Stokes and Navier-Stokes eq.
76D03, %% Incompressible viscous fluids: Existence, uniqueness, and regularity theory 
76D05 %% Incompressible viscous fluids: Navier-Stokes equations
}

\thanks{The third author gratefully acknowledges partial financial support
by the DFG through grant AB285/4-2}

\begin{abstract}
We prove short-time well-posedness and existence of global weak solutions of the Beris--Edwards model for 
nematic liquid crystals in the case of a bounded domain with inhomogeneous mixed Dirichlet and Neumann boundary conditions. The system consists of the Navier-Stokes equations coupled with an evolution 
equation for the $Q$-tensor. The solutions possess higher regularity in time of order one compared to the class of weak solutions with finite energy. This regularity is enough to obtain Lipschitz continuity of the non-linear terms in the corresponding function spaces. Therefore the well-posedness is shown with the aid of the contraction mapping principle using that the linearized system is an isomorphism between the associated function spaces.

\end{abstract}

\maketitle

%%%%%%%%%%%%%%%%%%%%%%%%%%%%%%%%%%%%%%%%%%%%%%%%%%%%%%%%%%%%%%
%%                                                          %%
%% I N T R O D U C T I O N                                  %%
%%                                                          %%
%%%%%%%%%%%%%%%%%%%%%%%%%%%%%%%%%%%%%%%%%%%%%%%%%%%%%%%%%%%%%%

\section{Introduction}

We study the well-posedness of a model for the instationary flow of a nematic liquid crystal described by a model due to Beris and Edwards, cf.~\cite{BerisEdwards1994}. In this model the orientation and degree of ordering of the liquid crystal is described by a symmetric, traceless $d\times d$ tensor $Q$. This description goes back to Landau and DeGennes, cf.~\cite{DeGennesProst1995}. In the case that the tensor is uniaxial, i.e.,
it has two equal non-zero eigenvalues, it can be represented as
\begin{equation*}
  Q=s\left(n\otimes n-\frac 1d\,\I\right)\,,
\end{equation*}
where the scalar order parameter $s\in [-\frac 12,1]$  measures the
degree of orientational ordering and $n$ is a unit vector and describes the direction of orientation. The Beris-Edwards models leads to a system, which couples the incompressible Navier-Stokes equations with a second order parabolic equation for the evolution of the tensor $Q$. More precisely, we consider
\begin{align}\label{strongformeqn}
\begin{split}
 \dt{u} + (u\cdot \grad)u  + \grad p
&\, = \diverg\barg{\nu(Q)\Lsym{u} }+ \diverg\barg{\sigma(Q,H)+ \tau(Q,H)}\,,\\
\diverg u &\, = 0\,, \\
 \dt{Q} + \barg{  u \cdot \grad } Q - S(\grad u, Q) &\, = \Gamma H(Q)
 \end{split}
\end{align}
in $\ts{T}=\Omega\times (0,T)$ for a sufficiently smooth bounded domain $\Omega\subseteq\R^d$, $d=2,3$, $T>0$. Here  $\sigma$ is a skew-symmetric tensor and $H$, $\tau$, and $S$ are symmetric tensors given by
\begin{align}\label{tensors}
\begin{split}
  H &=\lambda \Delta Q - aQ + b\barg{Q^2 - \tfrac{1}{d}\,\tr(Q^2)\,\I} - c\tr(Q^2) Q,\\
 \sigma(Q,H) &\, = QH - HQ = Q \Delta Q - \Delta Q Q\,,\\
\tau(Q,H) &\,= -\lambda \grad Q\odot \grad Q
-\xi  \barg{Q + \tfrac{1}{d}\,\I} H - \xi H  \barg{Q + \tfrac{1}{d}\,\I}
+ 2\xi  \barg{Q + \tfrac{1}{d}\,\I}\tr(QH)\,,\\
 S(\nabla u, Q) &\, %= S(\Lskew{u} , Q)
= \barg{\xi \Lsym{u}+\Lskew{u}}  \barg{Q + \tfrac{1}{d}\,\I}
 + \barg{Q + \tfrac{1}{d}\,\I} \barg{\xi \Lsym{u}-\Lskew{u}}  \\
&\,\qquad {}- 2\xi \barg{Q + \tfrac{1}{d}\,\I}\tr(Q\grad u)
 \,,
\end{split}
\end{align}
where we used the notation
\begin{align*}%\label{Ldecomposition}
\Lsym{u}  = \frac{1}{2}\barg{ \nabla u + (\nabla u)^T }\,,\quad
 \Lskew{u} = \frac{1}{2}\barg{\nabla u - (\nabla u)^T}
\end{align*}
for the stretch and the vorticity tensor, respectively. Moreover, $\Gamma$, $\lambda$, $a$, $b$, and $c$ are positive constants. 
We note that $S(\nabla u,Q)$ is introduced to describe how the flow gradient rotates and stretches the director field.

 Here $H$
relates to the variational derivative of the free energy functional
% with respec to the constraint $\tr Q=0$
 which uses
the one-constant approximation for the Oseen-Frank energy of liquid crystals
together with a Landau-DeGennes expression for the bulk energy
\begin{align}\label{freeenergy}
 \calF(Q) = \int_\Omega \barg{\frac{\lambda}{2}\, |\nabla Q|^2 + f_B(Q)}\dv{x}\, ,
\end{align}
where the bulk energy $f_B$ is given by
\begin{align*}
 f_B(Q) = \frac{a}{2}\,\tr(Q^2) -\frac{b}{3}\,\tr(Q^3)
+\frac{c}{4}\,\tr(Q^4)  \,.
\end{align*}
Hence $H=H(Q)$ can be rewritten as
\begin{align}\label{strongforcing}
 H(Q) &\, = \lambda \Delta Q + L,\quad L = -aQ + b\barg{Q^2 - \frac{1}{d}\,\tr(Q^2)\,\I} - c\tr(Q^2) Q,
\end{align}
where $L=-Df_B(Q)$ consists of lower-order terms in the equation.

We complement this system \eqref{strongformeqn}-\eqref{tensors} by the initial condition
\begin{align}\label{initialconditions}
\begin{split}
(u,Q)|_{t=0}&\,=(u_0,Q_0)%\in \HNSO
\quad\text{in}\ \Omega
\end{split}
\end{align}
 and the Dirichlet-Neumann boundary conditions of mixed type,
\begin{align}\label{mixedboundaryconditions}
\begin{alignedat}{2}
u&\, =0 &&\text{ on }(0,T)\times\p\O\,,\\
Q\,&=Q_D         &&\text{ on }(0,T)\times\Gamma_D\,,\\
\p_n Q\,&=Q_N  \quad       &&\text{ on }(0,T)\times\Gamma_N\,,
\end{alignedat}
\end{align}
where $\p\O=\Gamma_D\cup\Gamma_N$ and
$\Gamma_D,\Gamma_N$ are closed, disjoint subsets of $\mathbb{R}^d$ and $(Q_D,Q_N)$ will be independent of $t\in (0,T)$ in the following.% such that $\Gamma_D\cap \Gamma_N=\emptyset$ has $(d-1)$-dimensional Hausdorff measure zero.

So far there are only a few results on the mathematical analysis of this system. First contributions were given by Paicu and Zarnescu. In \cite{PaicuZarnescuARMA2012} the authors consider the case $\xi=0$, $\Omega=\R^d$. They prove existence of weak solutions for $d=2,3$ as well as higher regularity of weak solutions and weak-strong uniqueness if $d=2$. In  \cite{PaicuZarnescuSIMA2011} existence of weak solutions is proved provided that $\xi$ is sufficiently close to $0$ and $\Omega=\R^d$, $d=2,3$. Wilkinson studied in \cite{Wilkinson} the system \eqref{strongformeqn}-\eqref{tensors} under periodic boundary condition in the case that $f_B$ is replaced by a certain singular potential. The potential guarantees that $Q$ attains only physically reasonable values. He established existence of weak solutions for a general $\xi$ and higher regularity in the case of two space dimensions and $\xi=0$. Finally, Feireisl et al.~\cite{FeireislEtAlQTensor} derived a non-isothermal variant of the Beris-Edwards system and proved existence of weak solutions for this system in the case of a singular potential and for periodic boundary conditions. Recently, Wang et al.  establish in \cite{wangzz} a rigorous derivation from Beris-Edwards system to the Ericksen-Leslie system, which is widely investigated in the literature. Here we  refer to recent works \cite{huanglinwang2014}, \cite{linlinwang2010}, \cite{wangzzericksen}, \cite{wuxuliu2013} and the references therein for more details.

In the present paper we discuss existence of weak solutions in a bounded domain with mixed Dirichlet-Neumann boundary conditions as well as well-posedness of the system in a class of solutions, which possess higher regularity in time than the class of weak solutions. These solutions are not necessarily more regular with respect to the space variable. We note that in the case without boundary, one could  establish higher regularity in space for these solutions by using e.g. standard difference quotient techniques. But in the present case with boundary conditions we do not have an appropriate regularity result for the principal part of the linearized system, which is a Stokes system coupled with an elliptic equation for $Q$ through the terms $S(\nabla u, \tilde{Q})$ and $\diverg \sigma(\tilde{Q},H)$ for a suitable $\tilde{Q}$. These coupling terms cancel in the standard energy argument. However, they give rise to extra boundary integrals, when testing with higher order spacial derivatives of the solution, which cannot be absorbed. Fortunately, for higher order temporal derivatives these boundary terms vanish again.
 The main novelty in the paper is to use this observation together with the fact that one more temporal derivative (compared to the regularity class of weak solutions) is enough to prove Lipschitz continuity of the non-linear terms in the associated function spaces. Therefore we are able to prove existence of unique solutions in this regularity class for sufficiently short times. Let us note that we expect that our solutions also possess the natural higher regularity with respect to the spacial variables. This might be a future work. But to obtain well-posedness of the system locally in time such a regularity result is not needed. %%% Remark: \nu=\nu(Q)

In order to formulate our main results, we have to introduce some assumptions and notation.
In the sequel, we shall assume that $\Gamma=\lambda=a=b=c=1$ to simplify the notation. But all results hold true for general values of these constants if $c>0$.
In the following we assume that $\Omega$ is a bounded domain with $C^3$-boundary and
\begin{equation}\label{viscosity}
  \nu\in C^2(\mathbb{R}^{d\times d}),\quad 0<c_0\leq \nu(\cdot)\leq c_1<\infty
\end{equation}
for some constants $c_0,c_1$. In the following $\S_0$ denotes the vector space of all symmetric and trace free $d\times d$ matrices. More details on the notation are given in Section~\ref{sec:Notation} below.
% and we assume that $Q_D=\widetilde{Q}_D|_{\Gamma_1}$ for some $\widetilde{Q}_D\in H^{\frac32}(\partial\Omega) $.
We use the following notion of weak solution.

\begin{definition}\label{defweaksolution}
Suppose that $T>0$, $u_0\in L^2_\sigma(\Omega)$, $Q_0\in H^1(\Omega;\mathbb{S}_0)$, $Q_D\in H^{\frac32}(\Gamma_D;\mathbb{S}_0)$, and $Q_N\in H^{\frac12}(\Gamma_N;\mathbb{S}_0)$.  A pair $(u,Q)$ with
\begin{align*}
u&\, \in BC_w([0,T]; \HNSO)\cap L^2(0,T;\VNSO)\,,\\
Q&\,\in BC_w([0,T];H^1(\O;\S_0))\cap L^2(0,T;H^2(\O;\S_0))%,~\Delta Q\in L^2(\O_T;\S_0)
\end{align*}
is called a weak solution of the system \eqref{strongformeqn} in $\ts{T}$
with initial conditions~\bref{initialconditions} and boundary conditions
\eqref{mixedboundaryconditions} if the following holds:
 \begin{enumerate}
 \item For any $v\in C^1([0,T];H^1_{0,\sigma}(\Omega)\cap W^{1,\infty}(\O;\R^d))$ and $\Psi\in C^1([0,T];H^1(\O;\S_0))$ with $v|_{t=T}=\Psi|_{t=T}=0$,
     it holds that
     \begin{equation}\label{weakforufield}
\begin{split}
  &\int_{\ts{T}}\big(-u\cdot \dt{v}+(u\cdot\nabla u)\cdot v
+\nu(Q)  \Lsym{u}:\Lsym{v}\big)\, \dv(x,t)\\
&+\int_{\ts{T}}\left(\barg{ \sigma+\tau }(Q,H(Q))
 \right):\nabla v \, \dv(x,t)
=\int_\O u_0 v|_{t=0} \, {\rm d}x
\end{split}
\end{equation}
and
\begin{align}\nonumber
  -\int_{\Omega_T}Q:\dt{\Psi}\, \dv(x,t)&+\int_{\Omega_T} u\cdot\nabla Q:\Psi\, \dv(x,t)-\int_{\Omega_T} S(\nabla u,Q):\Psi\, \dv(x,t)\\\label{weakforq}
&=\int_{\Omega_T}  H(Q):\Psi\, \dv(x,t)+\int_\Omega Q_0:\Psi|_{t=0} \, {\rm d}x.
\end{align}
   \item  For almost every $t\in (0,T)$ the following energy inequality holds:
\begin{align*}%\label{additionalenergy}
\begin{split}
&\,  \frac 12\int_{\O}|u(t,x)|^2\, {\rm d}x+\mathcal{F}\left(Q(t,\cdot)\right)+\int_{\Omega_t}
 \left( \nu(Q(\tau,x))|Du(\tau,x)|^2+  |H(Q(\tau,x))|^2\right)\,{\rm d}(x,\tau)\\
 &\,\qquad \leq \frac 12\int_{\O}|u_0(x)|^2\,{\rm d}x+\mathcal{F}\left(Q_0\right).
\end{split}
\end{align*}
\item For almost every $t\in [0,T]$, $Q|_{\Gamma_D}=Q_D$ and $\frac{\partial Q}{\partial n}|_{\Gamma_N}= Q_N$.
%    \begin{equation}\label{weakbc}
 %     \int_{\O}\(\Delta Q:\Psi+\nabla Q:\nabla\Psi\)=0,~\forall \Psi\in H^1(\O;\S_0)~\text{with}~\Psi|_{\Gamma_D}=0.
%    \end{equation}
 \end{enumerate}
\end{definition}
Throughout the paper $\Omega$ is a bounded domain with $C^3$-boundary.
Our first result is a result on global existence of weak solutions in the case of homogeneous Neumann boundary
conditions for the director field $Q$.

\begin{theorem}[Existence of weak solutions]\label{weaksolution}
 Let $T, Q_D, Q_N, u_0$ be as in Definition~\ref{defweaksolution}. Then the system \eqref{strongformeqn} has a global weak solution for any $T>0$.
\end{theorem}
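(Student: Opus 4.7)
The plan is to construct weak solutions by a Galerkin approximation combined with a lifting of the boundary data for $Q$, and then to pass to the limit using compactness. Fix once and for all an extension $Q^*\in H^2(\Omega;\S_0)$ of the boundary data, solving for instance the elliptic mixed problem $-\Delta Q^*+Q^* = 0$ in $\Omega$ with $Q^*|_{\Gamma_D}=Q_D$ and $\p_n Q^*|_{\Gamma_N}=Q_N$; since $Q_D\in H^{3/2}$ and $Q_N\in H^{1/2}$, standard elliptic regularity gives $Q^*\in H^2$. Writing $Q=\widetilde{Q}+Q^*$ with time-independent $Q^*$, the perturbation $\widetilde{Q}$ satisfies homogeneous mixed boundary conditions and an equation of the same structure with additional lower-order forcing depending smoothly on $Q^*$.

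Next I would set up a Galerkin scheme by taking an orthonormal basis $\{w_k\}$ of $\HNSO$ consisting of smooth divergence-free functions (eigenfunctions of the Stokes operator), and a basis $\{\psi_k\}$ of the subspace of $H^1(\Omega;\S_0)$ with $\psi_k|_{\Gamma_D}=0$ (e.g.\ eigenfunctions of the Laplacian with homogeneous mixed boundary conditions). Projecting the equations onto the finite-dimensional spaces spanned by $w_1,\dots,w_n$ and $\psi_1,\dots,\psi_n$, one obtains a locally-in-time solvable system of ODEs for the coefficients, whose global existence will follow from the a priori energy estimate below.

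The crucial step is the energy estimate. Testing the Navier-Stokes equation with $u$ and the $Q$-equation with $H(Q)$, the coupling terms cancel: the key algebraic identity is
\begin{equation*}
\int_\Omega \bigl(\sigma(Q,H)+\tau(Q,H)\bigr):\nabla u\sd x + \int_\Omega S(\nabla u,Q):H\sd x = 0,
\end{equation*}
which follows by direct computation using symmetry of $H,\tau$ and skew-symmetry of $\sigma$, together with the identities $\Lsym{u}:\sigma=0$ and $\Lskew{u}:(\tau+\xi\{\cdots\})=0$. Combined with convexity-type estimates on $f_B$ (using $c>0$ to dominate the cubic term), this yields the dissipative inequality
\begin{equation*}
\frac{d}{dt}\Bigl(\tfrac12\|u\|_{L^2}^2+\calF(Q)\Bigr)+\int_\Omega\nu(Q)|\Lsym{u}|^2\sd x+\int_\Omega |H(Q)|^2\sd x \le C\bigl(1+\|Q^*\|_{H^2}^2\bigr),
\end{equation*}
where the right-hand side contains the contributions from the lifting $Q^*$. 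This gives uniform bounds $u_n$ in $L^\infty(0,T;\HNSO)\cap L^2(0,T;\VNSO)$ and $\widetilde{Q}_n$ in $L^\infty(0,T;H^1)\cap L^2(0,T;H^2)$; elliptic regularity for $H(Q_n)=\Delta Q_n+L(Q_n)$ with the mixed boundary data gives the $L^2(0,T;H^2)$ bound on $Q_n$.

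Finally, from the equations themselves one reads off uniform bounds on $\p_t u_n$ and $\p_t Q_n$ in suitable negative-order spaces, so that Aubin--Lions compactness yields strong convergence of $u_n\to u$ in $L^2(\Omega_T)$ and of $Q_n\to Q$ in $L^2(0,T;H^1(\Omega))$ along a subsequence. These convergences, together with the uniform bounds, suffice to pass to the limit in every nonlinear term appearing in \eqref{weakforufield} and \eqref{weakforq} (the cubic terms in $Q$ are handled since $H^1\hookrightarrow L^6$ and we have strong $L^2(H^1)$ convergence; the transport terms $u\cdot\nabla u$ and $u\cdot\nabla Q$ are standard; the stretching $S(\nabla u,Q)$ is linear in $\nabla u$ with $Q$ converging strongly in $L^2(L^4)$). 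Weak lower semicontinuity of the energy functionals transfers the energy inequality to the limit, and the trace $Q|_{\Gamma_D}=Q_D$, $\p_n Q|_{\Gamma_N}=Q_N$ persists by continuity of the trace operator. The main technical obstacle is the careful bookkeeping of the lifting terms generated by $Q^*$ in the energy identity and in the nonlinear coupling, together with verifying the cancellation displayed above in the discrete Galerkin setting; once this is in place the rest is essentially standard.
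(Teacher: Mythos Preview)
Your overall strategy---Galerkin approximation, energy estimate via the cancellation between $S(\nabla u,Q)$ and $\sigma+\tau$, compactness via Aubin--Lions, passage to the limit---is exactly the paper's. The difference lies in the one point you yourself flag as ``the main technical obstacle'': making the cancellation work at the discrete level. This is not mere bookkeeping; it is the place where a genuine idea is needed, and your proposal does not supply it.

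Concretely: in the Galerkin system the $Q$-equation holds only against test functions in $E_n$, so testing with $H(Q^{(n)})$ really means testing with $\pi_n H(Q^{(n)})$. On the other hand, the naive approximate $u$-equation contains the stress terms $\sigma(Q^{(n)},H(Q^{(n)}))$ and $\tau_2(Q^{(n)},H(Q^{(n)}))$ with the \emph{unprojected} $H$. The algebraic identity you quote then pairs $S(\nabla u^{(n)},Q^{(n)}):\pi_n H(Q^{(n)})$ against $\sigma(Q^{(n)},H(Q^{(n)})):\nabla u^{(n)}$, and these do not cancel. The paper's remedy (following Lin--Liu) is to \emph{modify the approximate Navier--Stokes equation} by replacing $H(Q^{(n)})$ with $\pi_n H(Q^{(n)})$ inside $\sigma$, $\tau_2$ and the term $H:\nabla Q$. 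With this modification, the cancellation identity \eqref{tensor4} applies verbatim with $Q_2=\pi_n H(Q^{(n)})$, and the Lyapunov functional becomes exact, with no forcing on the right-hand side.

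A secondary difference is the choice of lifting. The paper takes $\tilde{Q}$ \emph{harmonic} with $\tilde{Q}|_{\Gamma_D}=Q_D$ and homogeneous Neumann on $\Gamma_N$, so that $\Delta Q^{(n)}=\Delta\tilde{Q}+\sum h_i\Delta e_i\in E_n$ and hence $\pi_n\Delta Q^{(n)}=\Delta Q^{(n)}$. This is what allows the $L^2(H^2)$ bound on $Q^{(n)}$ to be read off from the bound on $\pi_n H(Q^{(n)})$ via $\Delta Q^{(n)}=\pi_n H(Q^{(n)})-\pi_n L(Q^{(n)})$. Your lifting $-\Delta Q^*+Q^*=0$ destroys this, since then $\Delta Q^{(n)}\notin E_n$. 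It also means the paper obtains a clean energy identity with zero right-hand side, rather than your inequality with a $C(1+\|Q^*\|_{H^2}^2)$ forcing. Your approach with forcing could in principle be made to work, but only after the discrete cancellation issue above is resolved.
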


% Note carefully that we do not assume that $\Gamma_D\cap\Gamma_N=\varnothing$, a situation which
% may lead to a loss of global $H^2$-regularity for $Q$ in space. If we make the additional assumption
% that $\Gamma_D\cap\Gamma_N=\varnothing$, then we obtain $H^2$ regularity globally
% in space,  $Q\in L^2(0,T;H^2(\O;\S_0))$, which is stronger than the regularity
% required in the definition of a weak solution in Definition~\ref{defweaksolution}. In this case, \eqref{weakbc} can be improved by $\p_n Q|_{\Gamma_N}=0$. Actually, if $Q(t)\in H^2(\O)$ for some $t\in [0,T]$,  we can show, by making use of \eqref{weakbc} and $\Psi|_{\Gamma_D}=0$, that $\int_{\Gamma_N}\p_n Q:\Psi=0$ and the assertion follows by a density argument.

Our second result concerns regularity in time for weak solutions of the system~\bref{strongformeqn}.
This result requires a subtle compatibility condition related to the initial data
for $Q$. As \eqref{strongformeqn} is an evolution equation, it can be written in the
abstract form
 \begin{equation}\label{stanon}
  \frac {d}{dt} (u,Q) =\mathcal{E}(u,Q)
 \end{equation}
where $\mathcal{E}:\VNSO\times H^2(\O)\to \VNSprimeO\times L^2(\O)$ is defined by
 \begin{equation}
   \begin{split}
\bscp{\mathcal{E}(u,Q)}{(\varphi,\Psi)}
=&-\int_\Omega (-u\otimes u +\nu(Q)\Lsym{u}+(\tau+\sigma)(Q,H)):\nabla\varphi\, {\rm d}x
 \\
&+ \int_\Omega(( u \cdot \nabla )Q+S+\Gamma H): \Psi\, {\rm d}x
   \end{split}
 \end{equation}
for all $(\varphi,\Psi)\in \VNSO\times H^2(\O;\S_0)$.
Since \eqref{mixedboundaryconditions} specifies a time-independent Dirichlet boundary condition,
it follows that $\dt{Q}|_{\Gamma_D}=0$ and this observation leads to the compatibility
condition that the trace of the second component
on the right-hand side of \eqref{stanon} vanishes on $\Gamma_D$.
% If the solution is sufficiently
% regular in time, then the same condition has to hold for the initial data for $Q$.
Consequently we define the
phase space,
\begin{equation*}%\label{init}
\Zspace =\bset{  (u,Q)\in \VNSO\times H^2(\O): \mathcal{E}(u,Q)
\in L^2_{\sigma}\times H^1_{\Gamma_D},\,
Q=Q_D\text{ on }\Gamma_D,\,
\p_n Q\,=Q_N \text{ on }\Gamma_N
% ~\text{satisfies} ~\eqref{mixedboundaryconditions}
},
\end{equation*}
where
\begin{align*}
H^1_{\Gamma_D}=H^1_{\Gamma_D}(\O;\S_0):=\{Q\in H^1(\O;\S_0): Q|_{\Gamma_D}=0\}\,.
\end{align*}
Note that the phase space defined above is non-empty. For instance, if we choose $u_0\in H^2(\O)\cap H^1_{0,\sigma}(\O)$ and $Q_0\in H^3(\O)$ satisfying the boundary condition in \eqref{mixedboundaryconditions} such that $\Delta Q_0|_{\Gamma_D}=0$, then $(u_0,Q_0)\in Z$.

\begin{theorem}[Local existence and  uniqueness of solutions with regularity in time] \label{localstrong}~\\
Suppose that %$\Gamma_D\cap\Gamma_N=\varnothing$, that the boundary data satisfy
  \begin{equation*}%\label{boundaryvaluemixed}
     (Q_D,Q_N)\in H^{\frac 52}(\Gamma_D;\S_0)\times H^{\frac 32}(\Gamma_N;\S_0)
  \end{equation*}
and that the initial data satisfy $(u_0,Q_0)\in \Zspace$. Then there exists some $T>0$ such that
 the system \eqref{strongformeqn} together with \eqref{initialconditions} and \eqref{mixedboundaryconditions} has a unique solution $(u,Q)$ with
\begin{align*}
 u&\, \in H^2(0,T;\VNSprimeO)\cap H^1(0,T;\VNSO)\,,\\
Q&\, \in H^2(0,T;L^2(\O;\S_0))\cap H^1(0,T;H^2(\O;\S_0))\,.
\end{align*}
\end{theorem}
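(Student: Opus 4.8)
The plan is to realize the solution as a fixed point of a contraction mapping on a suitable ball in the solution space. First I would set up the linearized problem: given a fixed $\tilde Q$ in an appropriate class (built from $Q_0$ and $Q_D,Q_N$), consider the linear system obtained from \eqref{strongformeqn} by keeping the principal part $\operatorname{div}(\nu(\tilde Q)\Lsym{u})$, the Stokes pressure, the coupling terms $S(\nabla u,\tilde Q)$ and $\operatorname{div}\sigma(\tilde Q,H)$ with $H=\Delta Q+L$, and moving all remaining terms (the convective terms $(u\cdot\nabla)u$ and $(u\cdot\nabla)Q$, the lower-order part $L$ of $H$, the quadratic tensor $\tau$, and the parts of $S$ and $\sigma$ involving differences $Q-\tilde Q$) to the right-hand side as data. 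The first main step is to show that this linearized operator is an isomorphism between the function spaces in the statement, i.e.\ from the space
\[
u\in H^2(0,T;\VNSprimeO)\cap H^1(0,T;\VNSO),\qquad
Q\in H^2(0,T;L^2(\O;\S_0))\cap H^1(0,T;H^2(\O;\S_0))
\]
(intersected with the affine constraint $Q|_{\Gamma_D}=Q_D$, $\p_nQ|_{\Gamma_N}=Q_N$ and the initial conditions) onto the corresponding data space $L^2(0,T;\HNSO)\times L^2(0,T;H^1_{\Gamma_D})$ together with the trace/initial-value data. Here the key analytic input is that although the coupled principal part does not admit higher \emph{spatial} regularity (because the coupling terms produce uncontrollable boundary integrals when testing with spatial derivatives), it \emph{does} behave well under one extra \emph{temporal} derivative: differentiating the equation in $t$ and testing with $(\dt u,\dt{(-\Delta Q)})$ or similar, the coupling boundary terms involve $\dt Q|_{\Gamma_D}=0$ and hence vanish, so a clean energy estimate closes. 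I would therefore prove a linear existence-and-uniqueness result by a Galerkin or semigroup argument together with this time-differentiated energy identity, using the compatibility condition encoded in $(u_0,Q_0)\in Z$ precisely to make the $t=0$ trace of $\dt{(u,Q)}$ lie in $\HNSO\times H^1_{\Gamma_D}$.

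The second step is to reduce the full nonlinear problem to a fixed-point equation $(u,Q)=\mathcal{L}^{-1}\mathcal{N}(u,Q)$ on a closed ball $\mathcal{B}_R$ in the solution space, where $\mathcal{L}$ is the linear isomorphism from Step~1 (linearized around, say, a fixed reference state $\tilde Q$ determined by the data) and $\mathcal{N}$ collects the nonlinear terms. The crucial point — and this is where the gain of one time derivative is spent — is a Lipschitz estimate: I would show that $\mathcal{N}$ maps $\mathcal{B}_R$ into the data space and satisfies
\[
\|\mathcal{N}(u_1,Q_1)-\mathcal{N}(u_2,Q_2)\|_{\text{data}}\le C(R)\,T^{\theta}\,\|(u_1,Q_1)-(u_2,Q_2)\|_{\text{sol}}
\]
for some $\theta>0$, with $C(R)$ independent of $T\le 1$. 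This follows from Sobolev embeddings in space-time: the regularity class $H^1(0,T;\VNSO)\cap H^2(0,T;\VNSprimeO)$ embeds into $C([0,T];\VNSO)$ and likewise $Q\in C([0,T];H^2)$, so products such as $(u\cdot\nabla)u$, $\operatorname{div}(\nabla Q\odot\nabla Q)$, $\tau(Q,H)$, and the $\tilde Q$-difference parts of $S$ and $\sigma$ lie in $L^2(0,T;\HNSO)$ (resp.\ $L^2(0,T;H^1_{\Gamma_D})$) with norms controlled by $R$; the factor $T^\theta$ comes from Hölder in time exploiting that we have strictly more time integrability/differentiability than needed. Choosing $R$ first (large enough to contain $\mathcal{L}^{-1}$ applied to the data and initial values) and then $T$ small, the map becomes a contraction on $\mathcal{B}_R$, and the Banach fixed-point theorem yields the unique solution; uniqueness in the stated class follows from the same Lipschitz bound.

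A few auxiliary points need care along the way: one must shift the inhomogeneous boundary data $(Q_D,Q_N)$ to the interior by an appropriate time-independent lift $\bar Q\in H^{3}(\O;\S_0)$ (this is where the hypothesis $(Q_D,Q_N)\in H^{5/2}(\Gamma_D)\times H^{3/2}(\Gamma_N)$ enters, so that $\bar Q$ has enough spatial regularity to make all nonlinear terms involving $\bar Q$, in particular $\Delta\bar Q$ and $\nabla\bar Q\odot\nabla\bar Q$, belong to the data spaces), and then solve for $Q-\bar Q$ with homogeneous boundary conditions; and one must verify that the pressure $p$ can be recovered and absorbed in the usual Stokes-space fashion so that working in $\VNSprimeO$ is legitimate. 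The main obstacle I anticipate is Step~1, the linear isomorphism: setting up the correct energy estimate for the \emph{coupled} Stokes/elliptic principal part with only temporal regularity — in particular identifying the right test functions and checking that every boundary term generated by the antisymmetric coupling $S$ versus $\operatorname{div}\sigma$ either cancels or is controlled — is the technical heart of the argument, and it is what forces the somewhat unusual solution space (one temporal derivative more than the energy class, but no extra spatial derivatives).
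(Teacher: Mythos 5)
Your overall strategy --- linearize about a reference $Q$-configuration, prove that the linearized operator is an isomorphism between spaces encoding one extra time derivative (using that testing with $\partial_t$ of the solution kills the problematic boundary terms since $\partial_t Q|_{\Gamma_D}=0$), obtain a Lipschitz estimate with a small factor $T^\theta$, and apply the Banach fixed-point theorem --- coincides with the paper's (which linearizes about the constant-in-time trajectory $Q_0$ and subtracts $(u_0,Q_0)$ to reduce to homogeneous initial and boundary conditions, rather than constructing a separate $H^3$-lift of the boundary data).

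There is, however, a concrete gap in the data space you propose for the linear isomorphism. You assert that the linearized operator maps the solution space onto $L^2(0,T;\HNSO)\times L^2(0,T;H^1_{\Gamma_D})$, and correspondingly that the nonlinear terms land there. This cannot be the right target. First, with $f\in L^2(0,T;\HNSO)$ the linear parabolic theory would produce one more \emph{spatial} derivative of the solution (e.g.\ $u\in L^2(0,T;H^2)$), which is precisely the regularity the coupled principal part with mixed Dirichlet--Neumann boundary conditions does \emph{not} admit, as the introduction emphasizes; to obtain instead $u\in H^2(0,T;\VNSprimeO)$ one must differentiate the equation in $t$, and this requires $f\in H^1(0,T;\VNSprimeO)$ together with a compatibility condition $f(0)\in\HNSO$ at $t=0$. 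The paper's data space $Y_0\subset H^1(0,T;\VNSprimeO)\times H^1(0,T;L^2(\O;\S_0))$ with $(f,g)|_{t=0}\in\HNSO\times H^1_{\Gamma_D}$ is the correct choice. Second, the nonlinearity does not map into your proposed space: with only $u\in C([0,T];\VNSO)\hookrightarrow C([0,T];L^6)$ one has $(u\cdot\nabla)u\in C([0,T];L^{3/2})$, which is generally not in $L^2(0,T;\HNSO)$; the estimate must instead be carried out for $\div(u\otimes u)$ and its time derivative in $H^1(0,T;\VNSprimeO)$. Once the data space is corrected, the remainder of your plan goes through; one additional ingredient the paper uses for the coercivity of the linear bilinear form, which you leave vague, is to realize the $Q$-equation via the Riesz isomorphism of $H^1_{\Gamma_D}$, i.e.\ to test it against $(I-\Delta)\Phi$ rather than $\Phi$, so that the cancellation identity \eqref{tensor4} between $S$ and $\sigma+\xi\tau_2$ applies directly.
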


\begin{remark}
  Theorems \ref{weaksolution} and \ref{localstrong}
are also valid for $\Gamma_D = \emptyset$ or $\Gamma_N = \emptyset$.
\end{remark}

Finally, we note that the idea to use higher regularity in time to obtain a unique solution of \eqref{strongformeqn}-\eqref{mixedboundaryconditions} (in the case $\xi=0$) has also been used by Guill\'{e}n-Gonz\'alez and Rodr\'iguez-Bellido~\cite{GuillenGonzalez}.

%%%%%%%%%%%%%%%%%%%%%%%%%%%%%%%%%%%%%%%%%%%%%%%%%%%%%%%%%%%%%%
%%                                                          %%
%% P R E L I M I N A R I E S                                %%
%%                                                          %%
%%%%%%%%%%%%%%%%%%%%%%%%%%%%%%%%%%%%%%%%%%%%%%%%%%%%%%%%%%%%%%

\section{Preliminaries}

\subsection{Notation}\label{sec:Notation}
For two vectors $a$, $b\in\R^d$ we set $a\cdot b = \sum_{i=1}^d a_i b_i$ and $a\otimes b=ab^T=(a_ib_j)_{1\leq i,j\leq d}$
and for two matrices $A$, $B\in\R^d$ we set $A: B
= \sum_{i,j=1}^d A_{ij} B_{ij}= \operatorname{tr}(A^TB)$. Then
\begin{align}\label{linalg}
 (AB):C=B:(A^TC)=A:(CB^T)\quad
\text{ for all }A,\, B,\, C\in \R^{d\times d}
\end{align}
and we omit the parentheses for simplicity in the following
if they are clear from the context. % that the equation is scalar.
Einstein's summation convention
is applied throughout the paper if repeated indices are written.
We define the norm of a matrix $A\in \M^{d\times d}$
by $|A|^2=\tr(A^TA)=A:A$. For a differentiable matrix-valued function $F\colon \Omega\to \R^{d\times d}$, we denote by $\div F=(\p_{\b}F_{\a\b})_{1\leq \alpha \leq d}$
the vector whose $\alpha$th component is the divergence of the $\alpha$th row in $F$.
Moreover, if $A,B\colon \Omega \to \R^{d\times d}$ are differentiable, we introduce the contraction $\odot$ by
$$
\nabla A\odot\nabla B
=\barg{ \p_iA_{\a\b}\p_jB_{\a\b} }_{1\leq i,j\leq d}
=\barg{ \p_iA:\p_jB }_{1\leq i,j\leq d}\,
$$
and $Q:\nabla A= (Q:\partial_j A)_{1\leq j\leq d}$. Finally, $\mathbb{I}_d$ denotes the matrix, which represents the identity on $\R^d$.

For the following it is convenient to rewrite the definitions of $\tau$ and $S$ as
\begin{align*}
\tau(Q,H) &\, = \tau_1(Q)+\xi\tau_2(Q,H)-\frac {2\xi}{d}\, H, \\
S(\nabla u,Q)&\, = S_1(\nabla u,Q)+\xi S_2(\nabla u,Q)+\frac{2\xi}{d}\,\Lsym{u}\,,
\end{align*}
with
\begin{align}\label{tau1}
\begin{aligned}
\tau_1(Q)&\, =-\nabla Q\odot \nabla Q-\frac 1d\, \I\,\tr Q^2 \,,\\
\tau_2(Q,H)&\,  =-QH-  HQ+2 (Q+\frac 1d\, \I)\tr(QH)\,,\\
S_1(\nabla u,Q)&\, = \Lskew{u} Q-Q\Lskew{u}\,,\\
S_2(\nabla u,Q)&\, =\Lsym{u}Q+Q\Lsym{u}-2(Q+\frac 1d\, \I)\tr (Q\nabla u)\,.
\end{aligned}
\end{align}
We note that
\begin{alignat*}{1}
  -\diverg \tau_1(Q)&= \Delta Q \odot \nabla Q + \nabla \left(\frac1d \operatorname{tr} Q^2 +\sum_{j=1}^n \frac12|\partial_j Q|^2 \right)\\
 &= H(Q):\nabla Q + \nabla   \left(\frac1d \operatorname{tr} Q^2 +\sum_{j=1}^n \frac12|\partial_j Q|^2 +f_B(Q)\right).
\end{alignat*}
Therefore
\begin{equation}\label{eq:tau1}
  \int_{\ts{T}} \tau_1(Q):\nabla v\dv{(x,t)} = \int_{\ts{T}} (H(Q):\nabla Q)\cdot v\dv{(x,t)}
\end{equation}
for all $Q$ and $v$ as in Definition~\ref{defweaksolution}.
% Flow equations with additional forcing term ($\grad^3 Q)$, coupled
% to a parabolic equation in $Q$.

Finally, $\langle x',x\rangle_{X',X}$ denotes the duality product of $x'\in X'$ and $x\in X$ for a Banach space $X$ and $(.,.)_H$ denotes the inner product of a Hilbert space $H$.

\subsection{Function spaces}
We use standard notation for the Lebesgue and Sobolev spaces $L^p(\Omega)$ and
$W^{k,p}(\Omega)$ as well as $L^p(\Omega;M)$ and $W^{k,p}(\Omega;M)$ for the corresponding spaces for $M$-valued functions.
Sometimes we omit the domain and the range if they are clear from the context.
The $L^2$-based Sobolev spaces are denoted by $H^k(\Omega)$ and $H^k(\Omega;M)$. The usual spaces of
divergence free vector fields are introduced by 
\begin{align*}
 \HNSO &\, %=  \HNS(\Omega;\R^d)
=\bset{u\in L^{2}(\O;\R^d),\,\diverg u = 0,\,\gamma(u)=0}\,,\\
 \VNSO &\,% =  \VNS(\Omega;\R^d)
=\bset{u\in H^{1}_0(\O;\R^d),\,\diverg u = 0}\,,\quad
H^{-1}_\sigma(\Omega) = (\VNSO)'
\end{align*}
where $\gamma(u)=u\cdot n\in H^{-\frac12}(\partial\Omega)$ is defined a generalized trace sense, cf. e.g.~\cite{BuchSohr}. % in the sense of trace and where $\nu$
%is the exterior normal to $\partial\Omega$.
% Other common notations for these spaces are $\HNS(\Omega)=\Ltwosigma$ and
% $\VNSO=\Honesigma$.
Note that
\begin{align*}
 L^2(\Omega;\R^d) = \HNSO \oplus {\HNSObb}^\perp \,\text{ with }\,
{\HNSObb}^\perp = \bset{ u \in L^2(\Omega;\R^d),\,u=\nabla q
\text{ for some }q\in H^1(\Omega) }\,.
\end{align*}
The Helmholtz projection, i.e., the orthogonal projection $L^2(\Omega;\R^d) \to \HNSO$,
is denoted by $P_\sigma$. We refer to \cite{BuchSohr} for its basic properties. For $f\in H^{-1}(\Omega;\R^d)$ we define $P_\sigma f \in H^{-1}_{\sigma}(\Omega)$ by $P_\sigma f=f|_{H^1_{0,\sigma}(\Omega)}$. Moreover, for $F\in L^2(\Omega;\R^{d\times d})$ we define $\diverg F\in H^{-1}(\Omega;\R^d)$ by 
\begin{equation*}
  \weight{\diverg F, \Phi}_{H^{-1},H^1_0}= -\int_{\Omega} F: \nabla \Phi \dv{x}\qquad \text{for all }\Phi\in H^1_0(\Omega;\R^d)\, .
\end{equation*}
%  Note that $P_\sigma$ maps $H^1_0(\Omega;\R^d)$ into
% $H^1(\Omega;\R^d)\cap L^2_\sigma(\Omega)$ continuously % \textbf{there is no zero from what I know, is the duality okay?
% %State estimates for the operator norm.}
% and we extend $P_\sigma$ by duality to a map
% % $P_{\sigma}:H^{-1}(\O)^d\mapsto \VNS'(\O)$ by
% \begin{align*}
% P_{\sigma}:H^{-1}(\O;\R^d)\to \VNSprimeO,\quad
%  \langle P_{\sigma}f\,, v\rangle=\langle f, P_{\sigma}v\rangle\,
% \quad\text{ for all } v\in H^1_0(\O;\R^d)\,.
% \end{align*}

%\textbf{define here or elsewhere the good properties for $P_\sigma$ along with a reference}
Finally, if $X$ is a Banach space and $T>0$, then $C([0,T];X)$ and $BC_w([0,T];X)$ denotes the space of all bounded $f\colon [0,T]\to X$ that are continuous or weakly continuous, respectively.

\subsection{An algebraic identity}
The following cancellation property plays an important role in the sequel.

\begin{lemma}
Let  $Q_1$, $Q_2\in L^2(\O;\S_0)$ and $u\in \VNSO$. Then
\begin{equation}\label{tensor4}
    S(\nabla u,Q_1): Q_2
+\left( \sigma(Q_1,Q_2)+\xi \tau_2(Q_1,Q_2)-\frac{2\xi}{d}Q_2 \right):\nabla u=0\,.
  \end{equation}
 \end{lemma}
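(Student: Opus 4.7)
The plan is to verify the identity term-by-term by grouping the contributions according to their dependence on the parameter $\xi$. Using the decomposition
\begin{equation*}
S(\nabla u,Q_1)=S_1(\nabla u,Q_1)+\xi S_2(\nabla u,Q_1)+\tfrac{2\xi}{d}D(u)
\end{equation*}
introduced in \eqref{tau1}, and matching it against the corresponding split of the second summand in \eqref{tensor4}, it suffices to establish the two partial identities
\begin{equation*}
S_1(\nabla u,Q_1):Q_2+\sigma(Q_1,Q_2):\nabla u=0, \qquad
S_2(\nabla u,Q_1):Q_2+\tau_2(Q_1,Q_2):\nabla u+\tfrac{2}{d}D(u):Q_2-\tfrac{2}{d}Q_2:\nabla u=0.
\end{equation*}
The last two terms in the second identity cancel because $Q_2\in\S_0$ is symmetric, so $Q_2:\nabla u=Q_2:D(u)$.

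For the $\xi^0$-identity I would exploit that $Q_1Q_2-Q_2Q_1$ is skew-symmetric (products of symmetric matrices transpose to each other), so $\sigma(Q_1,Q_2):\nabla u=\sigma(Q_1,Q_2):W(u)$. Applying \eqref{linalg} in the form $(AB):C=A:(CB^T)$ rewrites $(WQ_1):Q_2$ and $(Q_1W):Q_2$ as contractions of $W(u)$ with $Q_2Q_1$ and $Q_1Q_2$ respectively. Adding the two contributions, the terms cancel pairwise.

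For the $\xi^1$-identity I would expand $S_2:Q_2$ and $\tau_2:\nabla u$ separately. Using symmetry of $Q_2$ together with $\mathbb{I}_d:Q_2=\tr Q_2=0$, the prefactor $Q_1+\tfrac1d\mathbb{I}_d$ contracted with $Q_2$ collapses to $\tr(Q_1Q_2)$. Similarly, the divergence-free condition $\tr\nabla u=0$ gives $(Q_1+\tfrac1d\mathbb{I}_d):\nabla u=\tr(Q_1D(u))$. The remaining matrix terms $(DQ_1+Q_1D):Q_2$ and $-(Q_1Q_2+Q_2Q_1):\nabla u$ are handled again by \eqref{linalg}; since $Q_1Q_2+Q_2Q_1$ is symmetric, its contraction with $\nabla u$ equals its contraction with $D(u)$, and the rearrangement $(DQ_1):Q_2=D(u):(Q_2Q_1)$ and $(Q_1D):Q_2=D(u):(Q_1Q_2)$ produces the exact opposite of the $\tau_2$-contribution. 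The two cubic scalar terms involving $\tr(Q_1D(u))\tr(Q_1Q_2)$ cancel with opposite signs.

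The only real obstacle is bookkeeping: making sure every trace, symmetrization and transposition is accounted for correctly. There is no analytical difficulty beyond the algebraic identities of \eqref{linalg}, symmetry and trace-freeness of $Q_1,Q_2\in\S_0$, and the incompressibility constraint $\diverg u=0$ built into $\VNSO$.
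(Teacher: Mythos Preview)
Your proposal is correct and follows essentially the same approach as the paper: the same decomposition into the $S_1/\sigma$ pair, the $S_2/\tau_2$ pair, and the trivial cancellation of the $\tfrac{2\xi}{d}$ terms, all reduced to repeated applications of \eqref{linalg} together with the symmetry and trace-freeness of $Q_1,Q_2$ and the incompressibility of $u$. The only cosmetic difference is that the paper expands $S_1:Q_2$ directly in terms of $\nabla u$ and $(\nabla u)^T$ rather than first passing to $W(u)$ via the skew-symmetry of $\sigma(Q_1,Q_2)$, but this does not change the argument.
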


  \begin{proof}
  We are going to prove the following two identities
  \begin{equation*}
    \sigma(Q_1,Q_2):\nabla u+S_1(\nabla u,Q_1): Q_2=0,\quad
\tau_2(Q_1,Q_2):\nabla u+S_2(\nabla u,Q_1): Q_2=0\,.
  \end{equation*}
These identities together with
$-\frac{2\xi}{d}Q_2 :\nabla u+\frac{2\xi}{d}\Lsym{u}: Q_2=0$ imply the assertion.
We use~\bref{linalg} and the symmetries to obtain
\begin{align*}
2 S_1(\nabla u, Q_1):Q_2
&\, = \nabla u Q_1: Q_2
-(\nabla u)^T Q_1: Q_2
-Q_1  \nabla u:  Q_2
+Q_1 (\nabla u)^T  :Q_2 \\
&\, = 2\barg{\nabla u :Q_2 Q_1 -  \nabla u : Q_1Q_2   }
= -2 \nabla u : \sigma(Q_1, Q_2)\,.
\end{align*}
Similarly, using the symmetry of $Q_1$ and $Q_2$,
\begin{equation*}
 \begin{split}
    &\tau_2(Q_1,Q_2):\nabla u+S_2(\nabla u,Q_1):Q_2\\
&\,\quad  = \barg{ -Q_1Q_2-  Q_2Q_1+2 (Q_1+\frac 1d\, \I)(Q_1:Q_2) }:\nabla u \\
&\,\quad\qquad
+ \barg{ \Lsym{u}Q_1+Q_1\Lsym{u}-2(Q_1+\frac 1d\, \I) (Q_1:\nabla u) } : Q_2 =0\,,
 \end{split}
\end{equation*}
where we use that $Q_2$ and $\nabla u$ are trace free. The proof is complete.
 \end{proof}

\subsection{Orthogonal bases of eigenfunctions}

The idea is to solve the system \eqref{strongformeqn} by a Galerkin approach
based on eigenfunctions of the Laplace operator for $Q$ and eigenfunctions of
the Stokes operator for $u$. The existence of these bases follows from the
spectral theorem for compact operators in Hilbert spaces and standard results for elliptic equations and the stationary Stokes system. We summarize their
properties. Note that we consider the Laplace equation as a vector-valued equation
with values in $\S_0$.

%\textbf{if the smallest eigenvalue is positive then we don't have a pure Neumann problem}

\begin{lemma}[Eigenvalue problem of the Laplace operator]\label{eigenlaplace}~\\
   There exists an orthonormal basis $\seqn{e}\subset H^2(\O;\S_0)$
of $L^2(\O;\S_0)$ and a non-decreasing sequence of corresponding eigenvalues
 $\seqn{\lambda} \subset \mathbb{R}^+$
with $\lim_{n\to\infty}\lambda_n=\infty$ such that
\begin{align*}
 \begin{alignedat}{2}
 -\Delta e_n&=\lambda_n e_n \quad && \text{ in }\Omega\,, \\
e_n&=0 && \text{ on }\Gamma_D\,,\\
\frac{\p e_n}{\p n}&=0&& \text{ on }\Gamma_N\,.
 \end{alignedat}
\end{align*}
   \end{lemma}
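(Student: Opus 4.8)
The plan is to realize the claimed orthonormal basis as the eigenfunctions of the (negative) Laplacian on $\S_0$-valued functions subject to the mixed boundary conditions $Q=0$ on $\Gamma_D$ and $\p_n Q = 0$ on $\Gamma_N$, via the spectral theorem for the associated solution operator. First I would introduce the Hilbert space $\mathcal H := L^2(\O;\S_0)$ with its standard inner product $(P,R)\mapsto \int_\O P:R\dv x$, and the closed subspace $V := \{Q\in H^1(\O;\S_0): Q|_{\Gamma_D}=0\}$ (i.e.\ $H^1_{\Gamma_D}$ in the notation of the excerpt), equipped with the bilinear form $a(P,R) := \int_\O \nabla P : \nabla R \dv x$, where $\nabla P:\nabla R$ is understood componentwise as $\p_i P_{\a\b}\,\p_i R_{\a\b}$. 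Since $\Gamma_D$ is a closed subset of $\p\O$ and $\O$ is a bounded $C^3$ domain, as long as $\Gamma_D\neq\emptyset$ the Poincaré inequality holds on $V$, so $a$ is a coercive, bounded, symmetric bilinear form on $V$; hence by Lax--Milgram, for each $f\in\mathcal H$ there is a unique $Q=:Tf\in V$ with $a(Q,R) = (f,R)_{\mathcal H}$ for all $R\in V$, which is the weak formulation of $-\Delta Q = f$ in $\O$, $Q=0$ on $\Gamma_D$, $\p_n Q = 0$ on $\Gamma_N$. (If $\Gamma_D=\emptyset$ one works instead with $V=H^1$ and the shifted form $a(P,R)+(P,R)_{\mathcal H}$, replacing $-\Delta$ by $1-\Delta$; the eigenvalues then shift by $1$ and the statement is unaffected — this is the content of the accompanying remark.)

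Next I would check that $T:\mathcal H\to\mathcal H$ is compact, self-adjoint and positive. Self-adjointness and positivity are immediate from the symmetry and coercivity of $a$: $(Tf,g)_{\mathcal H} = a(Tf,Tg) = (f,Tg)_{\mathcal H}$ and $(Tf,f)_{\mathcal H} = a(Tf,Tf)\geq 0$, with equality only if $Tf=0$, i.e.\ only if $f=0$. Compactness follows because $T$ maps $\mathcal H$ boundedly into $V\hookrightarrow H^1(\O;\S_0)$ (from coercivity, $\|Tf\|_{H^1}\lesssim a(Tf,Tf)^{1/2} = (f,Tf)^{1/2}_{\mathcal H}\lesssim \|f\|_{\mathcal H}$) and $H^1(\O)\hookrightarrow L^2(\O)$ is compact by Rellich--Kondrachov on the bounded domain $\O$. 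The spectral theorem for compact self-adjoint positive operators on the separable Hilbert space $\mathcal H$ then yields an orthonormal basis $\seqn{e}$ of $\mathcal H$ consisting of eigenfunctions of $T$, with positive eigenvalues $\mu_n\to 0$; setting $\lambda_n := \mu_n^{-1}$ gives a sequence in $\R^+$ that, after reordering, is non-decreasing with $\lambda_n\to\infty$, and $Te_n = \mu_n e_n$ translates into the weak identity $a(e_n,R) = \lambda_n (e_n,R)_{\mathcal H}$ for all $R\in V$, i.e.\ $-\Delta e_n = \lambda_n e_n$ weakly with the stated boundary conditions.

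The remaining — and only nonroutine — point is the regularity claim $e_n\in H^2(\O;\S_0)$ together with the classical (rather than merely weak) sense of the boundary conditions. Here I would invoke elliptic regularity for the mixed Dirichlet--Neumann problem: $e_n\in V$ solves $-\Delta e_n = \lambda_n e_n\in L^2(\O;\S_0)$, so away from the interface $\overline{\Gamma_D}\cap\overline{\Gamma_N}$ interior and boundary $H^2$ estimates give $e_n\in H^2_{\mathrm{loc}}$; since $\Gamma_D$ and $\Gamma_N$ are assumed \emph{disjoint} closed subsets of $\p\O$ and $\p\O$ is $C^3$, there is in fact no interface, a uniform neighbourhood separates $\Gamma_D$ from $\Gamma_N$, and the standard global $H^2$-regularity theory for each of the pure Dirichlet and pure Neumann problems applies on overlapping charts, yielding $e_n\in H^2(\O;\S_0)$ with $\|e_n\|_{H^2}\lesssim \lambda_n\|e_n\|_{\mathcal H}+\|e_n\|_{H^1}<\infty$. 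The $H^2$-regularity then makes the traces $e_n|_{\Gamma_D}$ and $\p_n e_n|_{\Gamma_N}$ well-defined in $H^{3/2}(\Gamma_D)$ and $H^{1/2}(\Gamma_N)$ respectively, and the weak formulation $a(e_n,R)=\lambda_n(e_n,R)_{\mathcal H}$ combined with integration by parts against arbitrary $R\in V$ identifies them as $0$, completing the proof. I expect the separation of $\Gamma_D$ and $\Gamma_N$ to be exactly the hypothesis that makes the $H^2$ step clean; without it one would face the usual loss of regularity at a Dirichlet--Neumann junction.
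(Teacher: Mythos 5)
Your proof is correct and follows exactly the route the paper intends: the paper gives no detailed proof of this lemma, stating only that "the existence of these bases follows from the spectral theorem for compact operators in Hilbert spaces and standard results for elliptic equations," which is precisely your argument (Lax--Milgram solution operator, compactness via Rellich, spectral theorem, then $H^2$-regularity using the disjointness of $\Gamma_D$ and $\Gamma_N$). Your observation that the disjointness of $\overline{\Gamma_D}$ and $\overline{\Gamma_N}$ is exactly what keeps the $H^2$ estimate clean is a useful explicit remark that the paper leaves implicit.
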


A similar result holds for Stokes operator $A:=-P_{\sigma}\Delta$.

\begin{lemma}[Eigenvalue problem of the Stokes operator]\label{eigenstokes}~\\
  There exists an orthonormal basis
$\seqn{v}\subset \VNSO\cap W^{1,\infty}(\O;\R^d)$
of $L^2_{\sigma}(\O;\R^d)$ of eigenfunctions and a non-decreasing sequence
$\seqn{\omega}\subset \mathbb{R}^+$ of corresponding eigenvalues
with $\lim_{n\to\infty}\omega_n=\infty$ and $A v_i=\omega_i v_i$ for all $i\geq 1$.
   \end{lemma}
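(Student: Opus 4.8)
\textbf{Proof plan for Lemma~\ref{eigenstokes}.}
The statement is the standard spectral theory of the Stokes operator, adapted to recording that the eigenfunctions are also Lipschitz. The plan is to work with the Stokes operator $A=-P_\sigma\Delta$ on $L^2_\sigma(\Omega)$ with domain $D(A)=H^2(\Omega;\R^d)\cap H^1_{0,\sigma}(\Omega)$, which is well known to be self-adjoint and positive (its quadratic form is $\scp{Au}{v}=\int_\Omega \nabla u:\nabla v\dv x$ on $H^1_{0,\sigma}(\Omega)$, and positivity follows from the Poincar\'e inequality). First I would invoke the classical regularity theory for the stationary Stokes system on a bounded domain with $C^3$-boundary: for $f\in L^2_\sigma(\Omega)$ the problem $Au=f$ has a unique solution $u\in H^2(\Omega;\R^d)\cap H^1_{0,\sigma}(\Omega)$ with $\|u\|_{H^2}\le C\|f\|_{L^2}$; hence $A^{-1}\colon L^2_\sigma(\Omega)\to L^2_\sigma(\Omega)$ is bounded and, by the compact embedding $H^2(\Omega)\hookrightarrow\hookrightarrow L^2(\Omega)$ (Rellich--Kondrachov), compact. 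Since $A^{-1}$ is also self-adjoint and positive, the spectral theorem for compact self-adjoint operators yields an orthonormal basis $\seqn{v}$ of $L^2_\sigma(\Omega)$ consisting of eigenfunctions of $A^{-1}$, with positive eigenvalues $\mu_n\to 0$; setting $\omega_n=\mu_n^{-1}$, rearranged to be non-decreasing, gives $Av_n=\omega_n v_n$ with $\omega_n\to\infty$.

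It remains to check that each $v_n$ lies in $W^{1,\infty}(\Omega;\R^d)$. Here I would use a bootstrap argument on the eigenvalue equation $-\Delta v_n+\nabla p_n=\omega_n v_n$, $\diverg v_n=0$, $v_n|_{\partial\Omega}=0$: starting from $v_n\in H^2$, the right-hand side $\omega_n v_n\in H^2\subset L^q$ for every $q<\infty$ (in $d=2,3$), so $L^q$-regularity for the Stokes system gives $v_n\in W^{2,q}(\Omega;\R^d)$ for all $q<\infty$; by Sobolev embedding $W^{2,q}\hookrightarrow C^1(\bar\Omega)$ for $q>d$, hence $v_n\in W^{1,\infty}(\Omega;\R^d)$. (The $C^3$-regularity of $\partial\Omega$ is more than enough for the $W^{2,q}$ Stokes estimate.) Alternatively one can iterate $H^2\to H^3\to\cdots$ using higher elliptic regularity and then embed. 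Either way each individual eigenfunction is smooth up to the boundary, in particular Lipschitz.

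The only mildly delicate point — and the one I would flag as the main obstacle — is bookkeeping the functional-analytic setup on the solenoidal space: one must make sure that $A^{-1}$ is being considered as an operator \emph{into} $L^2_\sigma(\Omega)$ (using that the Stokes solution is automatically divergence free with vanishing normal trace, so it indeed lies in $L^2_\sigma(\Omega)$), and that self-adjointness is verified with respect to the $L^2_\sigma$ inner product via the symmetric form identity above together with density of $D(A)$ in $H^1_{0,\sigma}(\Omega)$. Once this is in place the spectral theorem applies verbatim and the $W^{1,\infty}$-claim is a routine consequence of elliptic regularity, exactly as for the Laplace operator in Lemma~\ref{eigenlaplace}. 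I would therefore present the proof briefly, citing \cite{BuchSohr} for the Stokes regularity theory and the properties of the Helmholtz projection, and only spell out the compactness and bootstrap steps.
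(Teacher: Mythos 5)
Your argument is correct and follows exactly the route the paper takes: the paper simply asserts that the orthonormal basis follows from the spectral theorem for compact self-adjoint operators together with the stationary Stokes regularity theory, and that $v_n\in W^{1,\infty}(\Omega;\R^d)$ follows from standard Stokes regularity on a $C^3$-domain, citing \cite{Galdi}. Your write-up merely spells out the same steps (self-adjointness and compactness of $A^{-1}$ on $L^2_\sigma$, spectral theorem, bootstrap to $W^{2,q}\hookrightarrow C^1(\bar\Omega)$) in detail.
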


The regularity result $v_n\in W^{1,\infty}(\O;\R^d)$
follows from the standard regularity theory for the Stokes system
provided that  $\p\O\in C^3$, cf. e.g.~\cite{Galdi}.

This result  allows us to define the fractional Stokes operator
 $A^{\frac m2}: \mathcal{D}(A^{\frac m2})\to L^2_{\sigma}(\O;\R^d)$
for $m\in\mathbb{N}$, $m\geq 1$. The following lemma gives the regularity
of functions in $\mathcal{D}(A^{\frac m2})$ and the proof can
be found in \cite[Proposition 4.12]{ConstantinFoias1988}.

\begin{lemma}\label{fractional}
  Let $\O\subset\R^d$ be an open and bounded set of class $C^{\ell}$ with $\ell\geq 1$.
Then $\mathcal{D}(A^{\frac m2})$ is contained in %\subset
$H^m(\O;\R^d)\cap \VNSO$ provided $1\leq m\leq \ell$.
\end{lemma}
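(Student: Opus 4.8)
\textbf{Proof proposal for Lemma~\ref{fractional}.}

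The plan is to reduce the statement to two known facts: the standard regularity theory for the stationary Stokes system on a $C^\ell$ domain, and the abstract characterization of the domain of powers of a non-negative self-adjoint operator in terms of its spectral decomposition. First I would recall that, by Lemma~\ref{eigenstokes}, $A=-P_\sigma\Delta$ is a self-adjoint, positive operator on $L^2_\sigma(\O;\R^d)$ with compact resolvent, so that $A^{m/2}$ is well-defined via the spectral theorem: writing $f=\sum_i c_i v_i$ with $c_i=(f,v_i)_{L^2_\sigma}$, one has $f\in\mathcal{D}(A^{m/2})$ iff $\sum_i \omega_i^m|c_i|^2<\infty$, and then $A^{m/2}f=\sum_i \omega_i^{m/2}c_i v_i$. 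In particular $\mathcal{D}(A^{m/2})\subset\mathcal{D}(A^{k/2})$ whenever $k\le m$, and for even $m=2\ell'$ the domain $\mathcal{D}(A^{\ell'})$ consists precisely of those $f$ for which iterated application of $A$ makes sense in $L^2$.

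The key step is the elliptic estimate for the Stokes resolvent problem. For $g\in L^2_\sigma(\O;\R^d)$ the unique weak solution $w\in\VNSO$ of $Aw=g$ (equivalently, of $-\Delta w+\nabla p=g$, $\diverg w=0$ in $\O$, $w=0$ on $\p\O$ for some pressure $p$) satisfies $w\in H^2(\O;\R^d)$ with $\|w\|_{H^2}\le C\|g\|_{L^2}$, provided $\p\O\in C^2$; more generally, if $g\in H^{j}(\O;\R^d)\cap L^2_\sigma$ and $\p\O\in C^{j+2}$ then $w\in H^{j+2}(\O;\R^d)$ with the corresponding estimate. This is the content of, e.g., the regularity theory in \cite{Galdi}. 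I would then prove the claim $\mathcal{D}(A^{m/2})\subset H^m(\O;\R^d)$ by induction, treating the cases of even and odd $m$ separately.

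For the inductive argument: if $f\in\mathcal{D}(A^{m/2})$ with $m\le\ell$, consider two cases. When $m$ is even, $m=2\ell'$, then $A^{\ell'}f=:g\in L^2_\sigma$, and applying the Stokes regularity estimate $\ell'$ times — at the $j$-th step promoting a solution from $H^{2(j-1)}$ to $H^{2j}$, which is licit since $\p\O\in C^\ell\subset C^{2j}$ for $2j\le m\le\ell$ — yields $f\in H^{2\ell'}(\O;\R^d)=H^m$. When $m$ is odd, $m=2\ell'+1$, then $A^{\ell'}f\in\mathcal{D}(A^{1/2})$, and since $\mathcal{D}(A^{1/2})=\VNSO\subset H^1$ (the square root of the Stokes operator has form domain $\VNSO$, a standard fact), the same bootstrap in steps of two regularity orders gives $f\in H^{2\ell'+1}=H^m$. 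In both cases $f\in\VNSO$ as well since $\mathcal{D}(A^{m/2})\subset\mathcal{D}(A^{1/2})=\VNSO$ for $m\ge1$. I expect the main obstacle to be bookkeeping the odd case cleanly — in particular justifying $\mathcal{D}(A^{1/2})=\VNSO$ and checking that the intermediate iterates genuinely solve Stokes systems with the required right-hand side regularity — but since the paper explicitly cites \cite[Proposition 4.12]{ConstantinFoias1988}, the cleanest route is simply to invoke that reference and indicate the bootstrap above as the idea of proof.
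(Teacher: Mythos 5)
Your proposal is correct and follows essentially the same route as the paper, whose entire ``proof'' is the citation to \cite[Proposition 4.12]{ConstantinFoias1988} that you also identify as the cleanest path. The bootstrap sketch you add on top (alternating the even/odd cases via Stokes regularity and $\mathcal{D}(A^{1/2})=\VNSO$) is a sound outline of what that reference actually proves, and the boundary-regularity bookkeeping you describe (needing $\p\O\in C^{2j}$ at the $j$-th lift, which holds since $2j\le m\le\ell$) checks out.
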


We recall a compactness result of Aubin-Lions type,
see~\cite{Simon1987} for the proof.

\begin{lemma}\label{compact}
Suppose that $p_1,\,p_2\in(1,\infty)$.
Assume that $X_{-1}$, $X_0$ and $X_1$ are three separable and reflexive Banach spaces
such that the inclusion $X_1\hookrightarrow X_0$ is compact and the inclusion
$X_0\hookrightarrow X_{-1}$ is continuous.
If $(u^{(m)})_{m\in \N}$ is a bounded sequence in $L^{p_1}(0,T;X_1)$
such that  $(\dt{u^{(m)}})_{m\in \N}$ is bounded in $L^{p_2}(0,T;X_{-1})$,
then there exists a subsequence $u^{(m')}$ which converges in
$L^{p_1}(0,T;X_0)$.
Furthermore, if $X_0=(X_{-1},X_1)_{[1/2]}$, where $(.,.)_{[\theta]}$ denotes the complex interpolation functor, and $p_1=\infty$,
then  there exists a subsequence $u^{(m')}$ which converges in $C([0,T];X_0)$.
\end{lemma}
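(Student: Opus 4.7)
The plan is to prove the statement in two stages, using as the main analytic tool \emph{Ehrling's inequality}: since $X_1 \hookrightarrow X_0$ is compact and $X_0 \hookrightarrow X_{-1}$ is continuous, a standard argument by contradiction (extracting an $X_0$-convergent subsequence from a hypothetical counterexample normalized in $X_0$) yields that for every $\varepsilon>0$ there exists $C_\varepsilon>0$ with
\[
\|u\|_{X_0} \leq \varepsilon \|u\|_{X_1} + C_\varepsilon \|u\|_{X_{-1}} \quad\text{for all } u\in X_1.
\]

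For the first assertion I would proceed as follows. The bound on $\partial_t u^{(m)}$ in $L^{p_2}(0,T;X_{-1})$ with $p_2>1$ yields, after passing to the absolutely continuous $X_{-1}$-valued representative via the embedding $W^{1,p_2}(0,T;X_{-1})\hookrightarrow C([0,T];X_{-1})$, the H\"older equicontinuity $\|u^{(m)}(t)-u^{(m)}(s)\|_{X_{-1}} \leq C|t-s|^{1-1/p_2}$. Combined with pointwise boundedness in $X_1$ (inherited at almost every $t$ from the $L^{p_1}$-bound, then extended by continuity) and the compact chain $X_1 \hookrightarrow X_0 \hookrightarrow X_{-1}$, Arzel\`a--Ascoli produces a subsequence converging in $C([0,T];X_{-1})$, hence also in $L^{p_1}(0,T;X_{-1})$. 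Reflexivity of $L^{p_1}(0,T;X_1)$ gives weak convergence to some $u$ along a further subsequence, and integrating Ehrling's inequality in time yields
\[
\|u^{(m')}-u\|_{L^{p_1}(0,T;X_0)} \leq \varepsilon\,C + C_\varepsilon\,\|u^{(m')}-u\|_{L^{p_1}(0,T;X_{-1})}.
\]
Letting first $m'\to\infty$ and then $\varepsilon\to 0$ gives strong convergence in $L^{p_1}(0,T;X_0)$.

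For the second assertion, with $p_1=\infty$ and $X_0=(X_{-1},X_1)_{[1/2]}$, the complex interpolation inequality $\|v\|_{X_0} \leq \|v\|_{X_{-1}}^{1/2}\|v\|_{X_1}^{1/2}$ upgrades the $X_{-1}$-equicontinuity to
\[
\|u^{(m)}(t)-u^{(m)}(s)\|_{X_0} \leq C\,|t-s|^{(1-1/p_2)/2}.
\]
Since $\{u^{(m)}(t)\}_m$ is uniformly bounded in $X_1$ and hence relatively compact in $X_0$ for every $t$, the Arzel\`a--Ascoli theorem for $X_0$-valued continuous functions extracts a subsequence converging in $C([0,T];X_0)$.

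The main obstacle is the careful pointwise-in-$t$ analysis in the first stage: one must justify that $u^{(m)}$ admits a continuous $X_{-1}$-valued representative and that pointwise $X_1$-boundedness is preserved under this modification, both of which are delicate when $p_1=\infty$ versus $p_1<\infty$ and at the endpoints $t=0,T$. A more elegant alternative, avoiding Arzel\`a--Ascoli entirely, is Simon's translation-estimate characterization of precompactness in $L^{p_1}(0,T;X_0)$, which is the route taken in the cited reference \cite{Simon1987}.
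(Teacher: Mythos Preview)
The paper does not give its own proof of this lemma; it simply cites Simon~\cite{Simon1987}. So there is nothing to compare against beyond the reference to Simon's translation-estimate approach, which you yourself mention at the end.

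Your sketch for the second assertion ($p_1=\infty$) is essentially correct: the $L^\infty(0,T;X_1)$-bound, combined with reflexivity of $X_1$ and the continuity of the $X_{-1}$-valued representative, does extend to a uniform pointwise bound $\sup_m\|u^{(m)}(t)\|_{X_1}<\infty$ for \emph{every} $t\in[0,T]$, and then the interpolation inequality plus Arzel\`a--Ascoli in $C([0,T];X_0)$ finishes.

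Your sketch for the first assertion, however, has a genuine gap, not merely a ``delicate'' point. The claim ``pointwise boundedness in $X_1$ (inherited at almost every $t$ from the $L^{p_1}$-bound, then extended by continuity)'' is false when $p_1<\infty$: a uniform $L^{p_1}(0,T;X_1)$-bound does \emph{not} imply $\sup_m \|u^{(m)}(t)\|_{X_1}<\infty$ for almost every $t$, because the sets $\{t:\|u^{(m)}(t)\|_{X_1}>M\}$ have small measure but depend on $m$ and may together cover $[0,T]$. Without this you have no pointwise relative compactness in $X_{-1}$, so Arzel\`a--Ascoli in $C([0,T];X_{-1})$ is unavailable, you obtain no strong convergence in $L^{p_1}(0,T;X_{-1})$, and the Ehrling step never gets off the ground. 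The correct route for $p_1<\infty$ is precisely the one you relegate to a closing remark: Simon's criterion for precompactness in $L^{p_1}(0,T;X_0)$ via uniform time-translation estimates in $X_{-1}$ (coming from the $L^{p_2}$-bound on $\partial_t u^{(m)}$) combined with the compactness of $X_1\hookrightarrow X_0$. That is not an optional polish but the actual argument.
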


The next interpolation result is stated in the three-dimensional situation which
is the main focus of this paper.

%\textbf{check wether this Gelfand needs compactness and check application,
%stress that $C$ is independent of time, if it is.}

\begin{lemma}[Interpolation]
%Let $\O\subset\R^d$, $d\leq 3$, be an open and bounded domain with boundary of class $C^3$. Then
There is some $C>0$ such that for all $f\in H^2(\Omega)$ the estimates
\begin{equation}\label{interpo1}
 \|f\|_{L^{\infty}(\O)}\leq C\|f\|_{H^1(\O)}^{\frac 12}\|f\|_{H^2(\O)}^{\frac 12}\,,
\end{equation}
and
\begin{equation}\label{interpo3}
  \|f\|_{L^{\infty}(\O)}\leq C\|f\|_{L^2(\Omega)}^{\frac 14}\|f\|_{H^2(\Omega)}^{\frac 34}
\end{equation}
hold.
If, additionally, $H_1\hookrightarrow H \hookrightarrow H_1'$ is a Gelfand-triple, then
  \begin{equation}\label{interpo2}
  \|f\|^2_{C([0,T];H)}\leq 2(\|f\|_{H^1(0,T;H_1')}\|f\|_{L^2(0,T;H_1)}+\|f(0,\cdot)\|^2_{H}).
  \end{equation}
%where $C>0$ is independent of $T>0$.
\end{lemma}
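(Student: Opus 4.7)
All three inequalities are standard interpolation results; the plan is to reduce each to a well-known Sobolev embedding or to the classical energy identity of Lions, respectively.

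\textbf{Estimates \eqref{interpo1} and \eqref{interpo3}.} Since $\O\subset\R^3$ is a bounded domain of class $C^3$, there is a bounded linear extension operator $E\colon H^2(\O)\to H^2(\R^3)$. Thus it suffices to prove both estimates for $f\in H^2(\R^3)$ and then apply the extension. On $\R^3$ the embedding $H^s(\R^3)\hookrightarrow L^\infty(\R^3)$ is continuous for every $s>\tfrac32$, and by standard Fourier/Bessel-potential interpolation,
\begin{equation*}
\|f\|_{H^s(\R^3)}\le C\,\|f\|_{H^{s_0}(\R^3)}^{1-\theta}\,\|f\|_{H^{s_1}(\R^3)}^{\theta},
\qquad s=(1-\theta)s_0+\theta s_1.
\end{equation*}
For \eqref{interpo1} I would pick $s_0=1$, $s_1=2$, $\theta=\tfrac12$, giving an intermediate space $H^{3/2+\varepsilon}$ (the threshold case $s=3/2$ is handled by taking $s_0$ slightly above $1$ or, cleanly, by invoking the Gagliardo--Nirenberg inequality $\|f\|_{L^\infty}\le C\|f\|_{H^1}^{1/2}\|f\|_{H^2}^{1/2}$ directly, which is valid in $d=3$). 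For \eqref{interpo3} I would pick $s_0=0$, $s_1=2$, $\theta=\tfrac34$, which again yields $s=3/2$ and hence the bound via $H^{3/2+\varepsilon}\hookrightarrow L^\infty$, or equivalently apply Gagliardo--Nirenberg with the exponents $\tfrac14$ and $\tfrac34$ in $d=3$.

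\textbf{Estimate \eqref{interpo2}.} This is the Lions--Magenes continuity lemma for Gelfand triples. The key observation is that any $f\in H^1(0,T;H_1')\cap L^2(0,T;H_1)$ satisfies the energy identity
\begin{equation*}
\|f(t)\|_H^2=\|f(0)\|_H^2+2\int_0^t \langle \dt{f(s)}, f(s)\rangle_{H_1',H_1}\sd s
\qquad\text{for all } t\in[0,T],
\end{equation*}
which in particular implies $f\in C([0,T];H)$. I would justify this identity by the standard approximation argument: mollify $f$ in time to obtain a sequence $f_\varepsilon$ of smooth $H_1$-valued maps for which the formula is obvious by the chain rule, then pass to the limit using the continuity of the duality pairing and the convergence of $f_\varepsilon \to f$ in both $L^2(0,T;H_1)$ and $H^1(0,T;H_1')$.

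\textbf{Conclusion of \eqref{interpo2}.} Applying the Cauchy--Schwarz inequality in time to the duality pairing in the energy identity gives
\begin{equation*}
\|f(t)\|_H^2\le \|f(0)\|_H^2+2\,\|\dt{f}\|_{L^2(0,T;H_1')}\,\|f\|_{L^2(0,T;H_1)},
\end{equation*}
and since $\|\dt{f}\|_{L^2(0,T;H_1')}\le \|f\|_{H^1(0,T;H_1')}$, taking the supremum over $t\in[0,T]$ yields the claimed bound (the factor $2$ in front of $\|f(0,\cdot)\|_H^2$ gives some slack). I do not expect any genuine obstacle here; the only delicate point is the absolute continuity of $t\mapsto\|f(t)\|_H^2$ in the Gelfand-triple setting, which is handled by the mollification argument above.
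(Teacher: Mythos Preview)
Your sketch is essentially correct. The paper itself does not give a proof of this lemma at all; it merely refers the reader to \cite[Section~2.1]{Abels2009}. So there is no ``paper's own proof'' to compare against beyond that citation, and your argument is exactly the standard one that reference encodes: Gagliardo--Nirenberg (after extension) for \eqref{interpo1} and \eqref{interpo3}, and the Lions--Magenes energy identity for Gelfand triples for \eqref{interpo2}.

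One small comment: in your treatment of \eqref{interpo1} and \eqref{interpo3} the intermediate-space route via $H^s\hookrightarrow L^\infty$ lands precisely on the critical exponent $s=3/2$, where the embedding fails, so that line of reasoning does not close by itself. You already noticed this and fell back on Gagliardo--Nirenberg, which is the clean way to handle it; I would simply drop the $H^{3/2+\varepsilon}$ detour and state Gagliardo--Nirenberg directly. The argument for \eqref{interpo2} is fine as written.
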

Proofs of these statements can be found in \cite[Section 2.1]{Abels2009}.

\begin{lemma}\label{temam}
  Let $X$ and $Y$ be two Banach spaces such that $X\subset Y$ with a continuous injection.
If $f\in L^{\infty}(0,T;X)$ is weakly continuous with values in $Y$,
then $f$ is weakly continuous with values in $X$.
\end{lemma}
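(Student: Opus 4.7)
The plan is to follow the classical argument of Strauss/Lions–Magenes/Temam (see \cite{Simon1987} and the standard functional-analytic textbooks). The statement requires implicitly that bounded sets of $X$ be weakly sequentially compact, which is automatic in all applications arising later in the paper since $X$ will be a Hilbert (hence reflexive) Sobolev-type space; I will carry out the proof under this tacit assumption on $X$.

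First I would upgrade the ``a.e.~membership'' in $X$ to membership at every point. Set $M:=\|f\|_{L^{\infty}(0,T;X)}$, so that there exists a null set $N\subset[0,T]$ with $f(t)\in X$ and $\|f(t)\|_X\leq M$ for every $t\in[0,T]\setminus N$. Fix an arbitrary $t_0\in[0,T]$ and choose a sequence $t_n\in[0,T]\setminus N$ with $t_n\to t_0$. Then $(f(t_n))_n$ is bounded in $X$, so by the Eberlein--\v{S}mulian theorem a subsequence satisfies $f(t_{n_k})\rightharpoonup g$ in $X$ for some $g\in X$ with $\|g\|_X\leq M$. Since the embedding $X\hookrightarrow Y$ is continuous and linear, weak convergence in $X$ transfers to weak convergence in $Y$, i.e.\ $f(t_{n_k})\rightharpoonup g$ in $Y$. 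On the other hand, the hypothesis of weak continuity in $Y$ yields $f(t_{n_k})\rightharpoonup f(t_0)$ in $Y$. By uniqueness of weak limits in $Y$ we conclude $g=f(t_0)$, so that after possibly redefining $f$ on $N$ we have $f(t_0)\in X$ with $\|f(t_0)\|_X\leq M$ for every $t_0$.

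Second I would establish the weak continuity in $X$ at an arbitrary $t_0\in[0,T]$ by the subsequence principle. Let $t_n\to t_0$ and fix $\phi\in X'$; I need to show that the scalar sequence $\langle\phi,f(t_n)\rangle_{X',X}\to \langle\phi,f(t_0)\rangle_{X',X}$. This sequence is bounded by $\|\phi\|_{X'}M$, so it suffices to prove that its only cluster point is $\langle\phi,f(t_0)\rangle$. Given any subsequence, extract a further subsequence $t_{n_k}$ along which $f(t_{n_k})\rightharpoonup g$ weakly in $X$ (again by Eberlein--\v{S}mulian applied to the bounded sequence in the reflexive space $X$). Exactly as in the previous paragraph, weak convergence is preserved by the embedding into $Y$ and weak continuity in $Y$ forces $g=f(t_0)$; hence $\langle\phi,f(t_{n_k})\rangle\to\langle\phi,f(t_0)\rangle$. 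Since every subsequence of $(\langle\phi,f(t_n)\rangle)_n$ has a further subsequence converging to the same limit, the full sequence converges, proving weak continuity of $f$ in $X$.

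The main subtlety is the tacit reflexivity of $X$: without it the statement is false in general, but every invocation of this lemma in the paper will be for Hilbert spaces, so the hypothesis is implicit. A secondary technical point is the identification of $f(t_0)$ with the limit at points $t_0\in N$; this is handled cleanly by the argument above, which simultaneously extends $f$ to a pointwise-defined representative on all of $[0,T]$ and shows weak continuity at each such point.
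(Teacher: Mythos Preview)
Your argument is correct and is precisely the classical proof. The paper itself does not give a proof: it simply cites \cite[pp.~263]{Temam1984}, where exactly the argument you wrote (Eberlein--\v{S}mulian plus identification of weak limits via the embedding into $Y$) is carried out. Your remark about the tacit reflexivity hypothesis on $X$ is well taken; Temam's original statement indeed assumes $X$ reflexive, and the paper's applications are to Hilbert spaces so nothing is lost.
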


The proof can be found in \cite[pp.263]{Temam1984}.

%%%%%%%%%%%%%%%%%%%%%%%%%%%%%%%%%%%%%%%%%%%%%%%%%%%%%%%%%%%%%%
%%                                                          %%
%% G L O B A L  W E A K  S O L U T I O N S                  %%
%%                                                          %%
%%%%%%%%%%%%%%%%%%%%%%%%%%%%%%%%%%%%%%%%%%%%%%%%%%%%%%%%%%%%%%

\section{Existence of weak solutions and proof of Theorem~\ref{weaksolution}}
%\textbf{uniqueness??? (Yuning: finite dimension system has unique solution by Picard and the limit is not unique) HA: not needed}

This section is devoted to the proof of the existence of global weak solutions
via a modified Galerkin method introduced in \cite{LinLiu1995}.
In view of Lemma \ref{eigenlaplace} and Lemma \ref{eigenstokes}, we
define the finite-dimensional Banach spaces
\begin{align*}
E_n&\, =\operatorname{Span}\{e_1,\ldots,e_n\}\subset H^2(\Omega;\S_0)
\subset L^2(\O;\mathbb{S}_0) \,,\\
V_n&\, =\operatorname{Span}\{v_1,\ldots,v_n\}\subset  \VNSO\cap W^{1,\infty}(\O;\R^d)
\subset \HNSO\,,
\end{align*}
along with  the orthogonal projection operators
\begin{equation*} %\label{pi} %\label{p1} was destroyed
  \pi_n  : L^2(\O;\mathbb{S}_0)\longrightarrow E_n\quad \text{ and }\quad
  \mathcal{P}_n : L^2_{\sigma}(\O)\longrightarrow V_n\,.
\end{equation*}
%\textbf{check these domains and the boundary conditions}
These two orthogonal projection operators are bounded
linear operators with norms bounded by one, a fact which  will be
used in the calculations in this section without being mentioned. To simplify notation we use
$(\cdot,\cdot)_{\O}$ to denote the inner product in $L^2(\O;\R^N)$, $N\geq 1$, and
in $L^2_{\sigma}(\O)$.
% For $A,B\in L^2(\O;\R^{d\times d})$, we denote $(A,B):=\int_{\O}A(x):B(x)dx$.
Since $Q_D$ coincides with some element of $H^{\frac 32}(\partial\Omega)$ on $\Gamma_D$, by standard results on elliptic boundary value problems, there exists an harmonic extension $\tilde{Q}(x)\in H^2(\O;\S_0)$ such that $\tilde{Q}|_{\Gamma_D}=Q_D$ and $\p_n\tilde{Q}|_{\Gamma_N}=0$.

With this notation in place, we seek approximations of the solutions of the
system~\eqref{strongformeqn} of the form
\begin{equation}\label{series}
  u^{(n)}(x,t)=\sum^n_{i=1} d_i(t)v_i(x)\,,
\quad Q^{(n)}(x,t)=\tilde{Q}(x)+\sum^n_{i=1} h_i(t)e_i(x)
\end{equation}
such that %for $k,\,\ell=1,2,\cdots, n$
$(u^{(n)}, Q^{(n)})$ satisfies the generalized Navier-Stokes equations on $V_n$, i.e.,
\begin{align}\label{aproximatea}
    &( \dt{u^{(n)}},v_k)_\O+( (u^{(n)}\cdot \nabla) u^{(n)}, v_k)_\O
+( \nu (Q^{(n)}) \Lsym{u^{(n)}} , \Lsym{v_k})_{\O} \\\nonumber
    &+((\pi_n H(Q^{(n)}) ):\nabla Q^{(n)}, v_k)_\O+\Barg{  (\sigma+\xi\tau_2)(Q^{(n)}, \pi_n H(Q^{(n)}))
-\frac{2\xi}{d}\pi_n H(Q^{(n)}), \nabla v_k }_\O =0
\end{align}
in $(0,T)$
for all $k\in \{1, \ldots, n\}$, the evolution equation for the director field
on $E_n$, i.e.,
\begin{align}\label{aproximateb}
    &\barg{   \dt{Q^{(n)}},e_\ell }_\O +\barg{  (u^{(n)}\cdot\nabla) Q^{(n)}, e_\ell }_\O
-\barg{ S(\nabla u^{(n)},Q^{(n)}),e_\ell }_\O= \barg{ H(Q^{(n)}),e_\ell }_\O
\end{align}
for all $\ell \in \{1, \ldots, n\}$,
and the initial conditions
\begin{align}
    u^{(n)}|_{t=0}&=\mathcal{P}_n u_0\label{appu}\,,\\
    Q^{(n)}|_{t=0}&= \tilde{Q}+\pi_n(Q_0-\tilde{Q})\label{appq}
\end{align}
in $\Omega$. Note that we have replaced the term $-\diverg \tau_1(Q,H(Q))$ in the approximate system by $H(Q):\nabla Q$ because of \eqref{eq:tau1}. In the following we will use that $\partial_t Q^{(n)},\Delta Q^{(n)}\in E_n$ since $\tilde{Q}$ is independent of $t$ and harmonic. Hence $\pi_n\partial_tQ^{(n)}=\partial_tQ^{(n)}$ and $\pi_n\Delta Q^{(n)}=\Delta Q^{(n)}$.

By Lemma \ref{eigenlaplace} and \ref{eigenstokes}, the
above system is well defined %\footnote{for almost every $t$, all the
%terms are integrable in $L^1(\O)$ because $v_k\in W^{1,\infty}(\O)$}
and
can be regarded as a finite-dimensional system of ordinary differential equations which has a solution on a maximal time interval
$[0, T_n)$ with $T_n>0$ for each $n\geq 1$. The following proposition implies that $T_n=\infty$. Moreover, the following a~priori bounds will be essential to pass to the limit $n\to\infty$ in the proof of
Theorem~\ref{weaksolution}.% we state a few results concerning regularity
%and a~priori bounds for the approximation of a solution to~\bref{strongformeqn}.
 \begin{prop}[Lyapunov functional]
\label{aprioriestimate1}~\\
 Let $n\geq 1$. Then  the system \eqref{aproximatea}--\eqref{appq}
has the Lyapunov functional
\begin{equation*}
  E\barg{ u^{(n)}(t,\cdot),Q^{(n)}(t,\cdot) }
=\frac 12\int_{\O}\bbar{u^{(n)}(t,x)}^2\dv{x}
+\mathcal{F}\barg{ Q^{(n)}(t,\cdot) }
\end{equation*}
which satisfies
\begin{equation}\label{energyfunctional1}
  \frac{d}{dt}E\barg{ u^{(n)}(t,\cdot),Q^{(n)}(t,\cdot) }
+ \int_{\O}\nu (Q^{(n)})\bbar{  \Lsym{ u^{(n)} }}^2\dv{x}
+ \int_{\O}\bbar{ \pi_n H(Q^{(n)}) }^2\dv{x}=0\,
\end{equation}
for all $t\in[0,T_n)$. Consequently $T_n=\infty$ for any $n\in\N$.
\end{prop}
\begin{proof}
We multiply \eqref{aproximatea} by $d_k(t)$, integrate in space, and sum over $k=1,\ldots, n$. This is equivalent to
replacing $v_k$  by $u^{(n)}(t)$ and an integration by parts leads to
\begin{equation}\label{enegu}
 \begin{split}
    &\frac 12\frac{d}{dt}\int_{\O}|u^{(n)}|^2\,{\rm d}x +\int_{\O}\nu (Q^{(n)})|\Lsym{u^{(n)}}|^2\,{\rm d}x +
\barg{ \pi_n H(Q^{(n)}): \nabla Q^{(n)}, u^{(n)} }_\O
\\&+
\Barg{  (\sigma+\xi\tau_2)\barg{ Q^{(n)},  \pi_n H(Q^{(n)}) }-
\tfrac{2\xi}{d} \, \pi_nH(Q^{(n)}), \nabla u^{(n)} }_\O =0\,
 \end{split}
\end{equation}
in $[0,T_n)$.
Note that by~\eqref{aproximateb} the boundary conditions for $Q^{(n)}$ are time-independent and
therefore
\begin{align*}
\p_tQ^{(n)}:\p_nQ^{(n)}=0\quad\text{ on } \p\O\,.
\end{align*}
This fact shows in combination with the chain rule and an integration by parts that
\begin{equation*} %\label{energyforqnew}
  \barg{ \dt{Q^{(n)}}, \pi_n H(Q^{(n)}) }_{\O} = \barg{  \dt{Q^{(n)}},  H(Q^{(n)}) }_{\O}
  =-\frac{d}{dt}\mathcal{F} (Q^{(n)} )\,.
\end{equation*}
Consequently we may replace $e_\ell$
in \eqref{aproximateb} by $\pi_nH (Q^{(n)})$ and an integration by parts leads to
\begin{equation}\label{enegq}
   \begin{split}
     \frac{d}{dt}\mathcal{F}(Q^{(n)})&-
\barg{ (u^{(n)}\cdot\nabla) Q^{(n)},\pi_nH (Q^{(n)}) }_{\O} \\
&+ \barg{ S(\nabla u^{(n)},Q^{(n)}),  \pi_n H(Q^{(n)}) }_{\O}
+ \int_{\O}\bbar{  \pi_nH(Q^{(n)}) }^2\,{\rm d}x =0
   \end{split}
 \end{equation}
in $[0,T_n)$.
Moreover, recall that by the algebraic identity \eqref{tensor4}
\begin{multline*} %\label{cancellation}
   \barg{ S(\nabla u^{(n)},Q^{(n)}),  \pi_n H(Q^{(n)}) }_{\O} +
 \barg{ (\sigma+\xi\tau_2)\big(Q^{(n)},\pi_n H(Q^{(n)})\big)-
\frac{2\xi}{d}\pi_n H(Q^{(n)}), \nabla u^{(n)} }_{\O}=0
 \end{multline*}
and this identity implies the assertion of the proposition together with \eqref{enegu}
and \eqref{enegq}.

Finally, \eqref{energyfunctional1} implies that the norm of the solution $(u^{(n)},Q^{(n)})$ cannot blow up in finite time. Hence the characterization of the maximal existence time for solutions of ordinary differential equations yields $T_n=\infty$.
%\textbf{add the passage to $T=\infty$. Yuning:We do not discuss the case $T=\infty$}
\end{proof}

To construct the solution of the system~\bref{strongformeqn} as a weak limit of
approximations we need stronger a~priori estimates concerning regularity in space and time.
The following results hold for all $T>0$.
% \textbf{comment on dependence of constants on $T$.}
Note that, unless otherwise indicated, all constants are generic constant
which may depend on $\Omega$, $T$, $\xi$, $\nu$ and its derivatives, and other parameters of the
system \eqref{strongformeqn} but are independent of $t\in [0,T]$ and
the index $n$ in the approximating system \eqref{aproximatea}--\eqref{appq}.

\begin{prop}[Regularity in space]\label{regspace}
Let $n\in \N$ and let $(u^{(n)},Q^{(n)})$ be the solution of
the system \eqref{aproximatea}--\eqref{appq}. Then
\begin{align*}
 u^{(n)} \in L^2(0,T;\VNSO)\cap L^{\infty}(0,T;L^2_{\sigma}(\O))\,,\quad
Q^{(n)} \in L^2(0,T;H^2(\O))
\end{align*}
and we have the a~priori estimates
\begin{align*}
\|u^{(n)}\|_{L^2(0,T;\VNSO)\cap L^{\infty}(0,T;L^2_{\sigma}(\O))} +
%  &\, \leq C_1\,, \\
%\label{uniformbound3}\\
   \| Q^{(n)} \|_{L^2(0,T;H^2(\O))} &\,\leq C_1(E_0) %(\Omegaprime)
%\label{local}
\end{align*}
where the constant $C_1(E_0)$ is independent of $n$ but depends on $E_0=E(u_0,Q_0)$.
\end{prop}

\begin{proof}
Proposition \ref{aprioriestimate1} implies that the ODE system \eqref{aproximatea}--\eqref{appq}
has a solution
for all times $T>0$ and that the solution satisfies the a~priori estimates
 \begin{subequations}\label{uniformbound}
    \begin{align}
 \sup_{t\in [0,T]}\mathcal{F}(Q^{(n)}(t,\cdot))+\|\pi_n H(Q^{(n)})\|_{L^2(\Omega_T)}&\,\leq  C(E_0)\,,\\
  \|D (u^{(n)}) \|_{L^2(\Omega_T)}+\|u^{(n)}\|_{  L^{\infty}(0,T;L^2_{\sigma}(\O))}&\,\leq C(E_0)
  \end{align}
\end{subequations}
with a constant $C$ independent of $n$.
Korn's inequality implies the bound on $u^{(n)}$ of the proposition.

To prove the bound on $Q^{(n)}$, we need to improve the bound for $\pi_n H(Q^{(n)})$
to a uniform bound for $ H(Q^{(n)})$. Since
\begin{align*}
 \| \pi_n\|_{\mathcal{L}(L^2)} \leq 1 \quad\text{ and }\quad
\pi_n H(Q^{(n)})= \Delta Q^{(n)}+\pi_n L(Q^{(n)})
\end{align*}
these bounds can be obtained from
$\Delta Q^{(n)}$ and $ L(Q^{(n)})$, respectively.
We obtain from \eqref{freeenergy},
 and Young's inequality for almost all $t\in [0,T]$
that
\begin{align*}
 \int_{\O}\left( |\nabla Q^{(n)}(x,t)|^2
+  |Q^{(n)}(x,t)|^2\right)\dv{x}\leq C\(\mathcal{F}(Q^{(n)}(t,\cdot))+1\).
  \end{align*}
As a result
 \begin{equation}\label{uniformbound1}
 \| Q^{(n)}\|_{L^{\infty}(0,T;H^1(\O))}\leq C\,.
 \end{equation}
The definition of $H$ in \eqref{strongforcing}
and (\ref{uniformbound}a) imply
 \begin{equation}\label{difference}
   \norm{ \Delta Q^{(n)} }_{L^2(\Omega_T)}\leq C_1
+\norm{ \pi_n L(Q^{(n)}) }_{L^2(\Omega_T)}\,.
 \end{equation}
Since $L(Q^{(n)})$ contains at most cubic terms in $Q$, we infer from Sobolev's inequality and \eqref{uniformbound1}
that $\|L(Q^{(n)})\|_{L^2(\Omega_T)}\leq C$
and the combination of this bound with \eqref{difference}
leads to
  \begin{equation}\label{uniformbound2}
   \norm{ \Delta Q^{(n)} }_{L^2(\Omega_T)}\leq C
 \end{equation}
and thus
 \begin{equation}\label{a2}
   \|H(Q^{(n)})\|_{L^2(\Omega_T)}\leq C\,.
 \end{equation}
Note that the leading part of $H(Q^{(n)})$ is $\Delta Q^{(n)}$ and therefore we obtain an $H^2$-estimate for  $Q^{(n)}$. %However, due to the boundary
% condition \eqref{mixedboundaryconditions}, the solution $Q^{(n)}$ may lose global
%$H^2$-regularity close to  $\Gamma_D\cap \Gamma_N$
%and in the case that $\Gamma_D\cap \Gamma_N\neq\emptyset$,
%one obtains only interior $H^2$-regularity.
In fact, for all $t\in [0,T]$,
   \begin{equation}
   \norm{ Q^{(n)}(\cdot,t) }_{H^2(\O)}
\leq C \left(\norm{ \Delta Q^{(n)}(\cdot,t) }_{L^2(\O)}+\|Q^{(n)}(\cdot,t)\|_{L^2(\O)}\right).
 \end{equation}
 This estimate combined with \eqref{uniformbound2} gives the second assertion in the proposition.
%\textbf{add details of the $H^2$-estimate since there are no boundary conditions. (Yuning: I changed the def of weak solution, so we do not need boundary $H^2$ regularity of Q to define the Neumann part. The interior $H^2$ is clear, I believe)}
The proof is now complete.
\end{proof}

\begin{prop}[Regularity in time]\label{regtime}
Let $(u^{(n)},Q^{(n)})$ be the solution of
the system \eqref{aproximatea}--\eqref{appq} for some $n\in \N$. Then
%\begin{align*}
% \dt{u^{(n)}} \in L^2(0,T;\mathcal{D}(A^{3/2})')\,,\quad
%\dt{Q^{(n)}} \in L^2(0,T;H^2(\O))
%\end{align*}
%and
we have the a~priori estimate 
\begin{align*}
%\label{ut}
  \norm{ \dt{u^{(n)}} }_{L^2(0,T;\mathcal{D}(A^{3/2})')}+
%\label{qt}
 \norm{  \dt{Q^{(n)}} }_{L^2(0,T;H^{-1}(\O))}\leq C_{2}(E_0)\,,
\end{align*}
where $C_{2}(E_0)$ is independent of $n$.% but may depend on
%$\dist(\Omegaprime,\partial\Omega)$.
\end{prop}
%\textbf{decide on $t$ notation (Yuning: $u_{,t}$ looks strange)}
\begin{proof}
We begin with the estimate for $\dt{u^{(n)}}$.
By Lemma \ref{fractional}, we have the embedding
$\mathcal{D}(A)\hookrightarrow L^{\infty}(\O)$. Thus
\begin{equation}\label{long1}
  \begin{split}
    \bbar{ \barg{ (u^{(n)}\cdot \nabla) u^{(n)}(t), v_k }_\O }
&\leq \|u^{(n)}(t)\|_{L^2(\O)}\|\nabla u^{(n)}(t)\|_{L^2(\O)}\|v_k\|_{L^{\infty}(\O)}
\\&
\leq C\|u^{(n)}(t)\|_{L^2(\O)}\|\nabla u^{(n)}(t)\|_{L^2(\O)}\|v_k\|_{\mathcal{D}(A)}\,,
\end{split}
\end{equation}
for all $t\in [0,T]$ as well as
\begin{equation}
  \bbar{ \barg{ (\pi_n H(Q^{(n)})):\nabla Q^{(n)}(t), v_k }_\O }
\leq C \|H(Q^{(n)}(t))\|_{L^2(\O)}\|\nabla  Q^{(n)}(t)\|_{L^2(\O)}\|v_k\|_{\mathcal{D}(A)} \,,
\end{equation}
and by~\bref{viscosity} %\textbf{state embedding in preliminaries}
\begin{equation}
  \bbar{\barg{ \nu(Q^{(n)})\Lsym{u^{(n)}}(t),\Lsym{v_k }}_\O }
\leq C\|\nabla u^{(n)}(t)\|_{L^2(\Omega)}\|v_k(t)\|_{\mathcal{D}(A^{1/2})}\,.
\end{equation}
Moreover, Lemma \ref{fractional} implies the embedding
$\mathcal{D}(A^{\frac 32})\hookrightarrow W^{1,\infty}(\O)$ and hence
\begin{align*}
\begin{aligned}
&  \bbar{ \barg{  \sigma\big(Q^{(n)}, \pi_n H(Q^{(n)})\big)(t)-
\frac{2\xi}{d} \big(\pi_nH(Q^{(n)})\big)(t), \nabla v_k }_\O }\\
&\qquad   \leq C \barg{1+\|Q^{(n)}(t)\|_{L^2(\O)} } \|H(Q^{(n)}(t))\|_{L^2(\O)}
  \|v_k\|_{\mathcal{D}(A^{3/2})}
\end{aligned}
\end{align*}
and
\begin{align}\label{long2}
\begin{aligned}
& \bbar{ \barg{ \xi\tau_2\big(Q^{(n)},  \pi_n H(Q^{(n)})\big)(t), \nabla v_k } }\\
&\qquad  \leq C \|Q^{(n)}(t)\|_{L^4(\O)} \barg{ \|Q^{(n)}(t)\|_{L^4(\O)}+1 }
\|H(Q^{(n)}(t))\|_{L^2(\O)}\|v_k\|_{\mathcal{D}(A^{3/2})}\,.
\end{aligned}
\end{align}
The combination of \eqref{long1}-\eqref{long2} together with \eqref{aproximatea} yields
for all $k\in\{1,\ldots,n\}$ and almost all $t\in [0,T]$ that
\begin{equation}\label{timeder1}
 \bbar{ \barg{ \partial_t u^{(n)}(t),v_k } }
\leq C b_n(t)\|v_k(t)\|_{\mathcal{D}(A^{3/2})}\,,
\end{equation}
where $b_n(t)$ is defined by
\begin{align*}
 b_n(t)&=\barg{  1+\|u^{(n)}(t)\|_{L^2(\O)} }\|\nabla u^{(n)}(t)\|_{L^2(\O)}  \\
&\qquad
+\|H(Q^{(n)})(t)\|_{L^2(\O)}\|\nabla  Q^{(n)}(t)\|_{L^2(\O)}
+\barg{ 1+\|Q^{(n)}(t)\|^2_{L^4(\O)} } \|H(Q^{(n)})(t)\|_{L^2(\O)}\,.
\end{align*}
In view of  $\left(\partial_t u^{(n)}(t),v\right)_\Omega=0$ for $v\perp V_n$
one obtains
\begin{equation}
  \bbar{ \barg{ \partial_t u^{(n)}(t),v }_{\O} }
\leq C b_n(t)\|v\|_{\mathcal{D}(A^{3/2})}\quad \text{for all } v\in \mathcal{D}(A^{3/2})
\text{ and for almost all }t\in [0,T].
\end{equation}
Thus
\begin{equation}
  \bnorm{ \partial_t u^{(n)}(t) }_{\mathcal{D}(A^{3/2})'}\leq C b_n(t)\quad
\text{ for almost all }\in [0,T]\,.
\end{equation}
The above estimate implies the a~priori bound for $\dt{u}$
% \begin{equation}\label{ut}
%   \bnorm{ \partial_t u^{(n)} }_{L^2(0,T;\mathcal{D}(A^{3/2})')}\leq C_{12}
% \end{equation}
since $b_n(t)$ is in $L^2(0,T)$ due to \eqref{uniformbound} and \eqref{uniformbound1}.

Now we turn to the estimate for $\dt{Q^{(n)}}$.
By H\"{o}lder's inequality, Sobolev's embedding $H^1(\O)\hookrightarrow L^6(\O)$
and \eqref{uniformbound1} we find for almost all $t\in [0,T]$ that
\begin{align*} %\label{a4}
\begin{aligned}
  \bbar{ \barg{  (u^{(n)}\cdot\nabla) Q^{(n)}(t), e_\ell }_{\O} }
&\, \leq \|u^{(n)}(t)\|_{L^3(\O)}\|\nabla Q^{(n)}(t)\|_{L^2(\O)}\|e_\ell\|_{L^6(\O)}\\
&\,  \leq C\|u^{(n)}(t)\|_{H^1(\O)}\|\nabla Q^{(n)}(t)\|_{L^2(\O)}\|e_\ell\|_{H^1(\O)}\, ,
\end{aligned}
\end{align*}
and
\begin{align*}
\begin{aligned} %\label{a5}
  \bbar{ \barg{  S(u^{(n)},Q^{(n)})(t),e_\ell }_{\O} }
& \leq C\|\nabla u^{(n)}(t) \|_{L^2(\O)}
\barg{ \|Q^{(n)}(t) \|^2_{L^6(\O)} +\|Q^{(n)}(t) \|_{L^3(\O)} }
\|e _\ell\|_{L^6(\O)}\\
&  \leq \tilde{C}\|\nabla u^{(n)}(t) \|_{L^2(\O)}
\barg{ \|\nabla Q^{(n)}(t) \|^2_{L^2(\O)}+1 }\|e_\ell\|_{H^1(\O)}\, ,
 \end{aligned}
\end{align*}
as well as
\begin{equation*} %\label{a3}
  \bbar{ \barg{  H(Q^{(n)}(t)),e_\ell }_{\O} }
\leq \|H(Q^{(n)}(t))\|_{L^2(\O)}\|e_\ell\|_{L^2(\O)}\,.
\end{equation*}
These estimates imply together with \eqref{aproximateb}
that for almost every $t\in [0,T]$,
\begin{equation}\label{orth}
 \bbar{\barg{    \dt{Q^{(n)}}(t),e }_{\O} }
\leq C y_n(t)\|e\|_{H^1(\O)},~\forall e\in E_n\,,
\end{equation}
where $y_n(t)$ is defined by
\begin{equation}\label{cn}
  y_n(t)=\|u^{(n)}\|_{H^1(\O)}
\barg{ \|\nabla Q^{(n)}(t)\|^2_{L^2(\O)}+1 }+\|H(Q^{(n)}(t))\|_{L^2(\O)} \,.
\end{equation}
By the orthogonality of the eigenvectors, $(\dt{ Q^{(n)} },e)=0$ for all $e\perp E_n$ and
consequently
\begin{equation}
 \bbar{ \barg{  \p_t Q^{(n)}(t),e }_{\O} }
\leq C y_n(t)\|e\|_{H^1(\O)}\quad \text{ for all }
 e\in H^1(\O;\S_0)\text{ and for almost all } t\in [0,T]\,,
\end{equation}
which leads to
\begin{equation}
 \|  \p_t Q^{(n)} (t) \|_{H^{-1}(\O)}\leq Cy_n(t)\,.
\end{equation}
The assertion of the proposition follows now since
% \begin{equation}\label{qt}
%  \left\|  \p_t Q^{(n)} \right\|_{L^2(0,T;H^{-1}(\O))}\leq C\,.
% \end{equation}
$y_n(t)$ is integrable in $L^2(0,T)$  in view of the estimates in Proposition~\ref{regspace}
and \eqref{cn}, \eqref{uniformbound1} and \eqref{a2}.
\end{proof}

After these preparations we are in a position to give the proof of Theorem~\ref{weaksolution}.

\medskip

\noindent
\textit{Proof of Theorem~\ref{weaksolution}.}
We divide the proof into several steps.

\medskip

\noindent
\textit{Step~1: Compactness and construction of weak limits.}
We conclude from Proposition~\ref{regspace} and~\ref{regtime} the following bounds
on the solutions $(u^{(n)},Q^{(n)})$ of the Galerkin approximation %for all
%open sets $\Omegaprime\Subset\O$,
\begin{align}\label{bounds}
\begin{aligned}
  &\, \norm{ u^{(n)} }_{L^2(0,T;\VNS)\cap L^{\infty}(0,T;L^2_{\sigma})} +
\norm{ \partial_t u^{(n)} }_{L^2(0,T;\mathcal{D}(A^{3/2})')}  \\
&\, \qquad \quad
 +\norm{  \p_t Q^{(n)} }_{L^2(0,T;H^{-1})} +
\|\nabla Q^{(n)}\|_{L^{\infty}(0,T;L^2)}
\\ &\, \qquad \quad
 + \norm{ \Delta Q^{(n)} }_{L^2(\Omega_T)} +
\|H(Q^{(n)})\|_{L^2(\Omega_T)} \leq C\,,
\end{aligned}
\end{align}
where the constant $C$ is independent of $n$.
Moreover,
\begin{align*}
 \norm{  Q^{(n)} }_{L^2(0,T;H^2)} \leq C(\O)\,.
\end{align*}
%\textbf{check why we need this bound (Yuning: we need it to obtain the compactness in the interior)}
By the weak compactness of reflexive Banach spaces
and the weak compactness of the dual spaces of separable spaces we can extract a
%\textbf{was there some weak vs strong measurability issu?}
subsequence of $(u^{(n)},Q^{(n)})$, which we denote again by $(u^{(n)},Q^{(n)})$, such that
the weak convergences
\begin{align}\label{weak}
\begin{aligned}
u^{(n)}& \weakly_{n\to\infty} u &&\text{ in } L^2(0,T;\VNSO)\,,\\
%\textbf{Yuning: not used}~\dt{ u^{(n)} } &\weakly \dt{u} && \text{ in } L^2(0,T;\mathcal{D}(A^{3/2})')\,, \\
Q^{(n)} &\weakly_{n\to\infty} Q && \text{ in } L^2(0,T;H^2(\O;\S_0))\,,\\
%\textbf{Yuning: not used}~\dt{Q^{(n)}} &\weakly \dt{ Q } && \text{ in } L^2(0,T;H^{-1}(\O;\S_0))\,,\\
\Delta Q^{(n)} &\weakly_{n\to\infty} \Delta Q && \text{ in } L^2(\Omega_T;\S_0)\,,\\
\end{aligned}
\end{align}
and the weak-*-convergences
\begin{align}\label{weakstar}
\begin{aligned}
Q^{(n)} & \weaklystar_{n\to\infty} Q &&\text{ in } L^{\infty}(0,T;H^1(\O;\S_0))\,,\\
u^{(n)}& \weaklystar_{n\to\infty} u && \text{ in } L^{\infty}(0,T;L^2_{\sigma}(\O))
\end{aligned}
\end{align}
hold.
Moreover, for fixed $\epsilon>0$ we may choose the subsequence
in view of Lemma $\ref{compact}$ and \eqref{bounds} in such a way that additionally
the strong convergences
\begin{equation}\label{strong}
\begin{aligned}
  Q^{(n)}& \to_{n\to\infty} Q && \text{ in }
L^2(0,T;H^{2-\epsilon}(\O))\cap L^p(\Omega_T),~\forall p\in (1,6)\,,\\
  u^{(n)}& \to_{n\to \infty} u && \text{ in } L^2(0,T;L^2_{\sigma}(\O))\,,
\end{aligned}
\end{equation}
and
\begin{equation}\label{strongc}
\begin{aligned}
  Q^{(n)}& \to_{n\to\infty} Q && \text{ in } C([0,T];L^2(\O))\,,\\
  u^{(n)}&\to_{n\to\infty} u && \text{ in } C([0,T];\VNSprimeO)
\end{aligned}
\end{equation}
hold. The estimates \eqref{strongc}, \eqref{weakstar} and Lemma \ref{temam} imply that
\begin{equation}
 u\in BC_w([0,T];L^2_{\sigma}(\O))\,,\quad
Q\in BC_w([0,T];H^1(\O))\,.
\end{equation}
In order to pass to the limit we assert that the subsequence satisfies additionally
%\textbf{check $d$ vs $3$ issue}
\begin{equation}\label{coincide}
\begin{aligned}
\nu (Q^{(n)}) \Lsym{ u^{(n)} }& \weakly_{n\to\infty} \nu(Q) \Lsym{u} && \text{ in } L^2(\Omega_T;\R^d)\,,\\
H(Q^{(n)})&\weakly_{n\to\infty} H(Q) && \text{ in } L^2(\Omega_T;\S_0)\,.
\end{aligned}
\end{equation}
In fact, for all $\varphi\in L^2(\Omega_T)$ we infer from Lebesgue's
dominated convergence theorem, \eqref{strong} and the properties of the
viscosity coefficient that $\varphi \nu(Q^{(n)})$ converges strongly to
$\varphi{\nu(Q)}$ in $L^2(\Omega_T)$ %\textbf{check}
and the conclusion follows
from the weak convergence of $\Lsym{ u^{(n)} }$ to $\Lsym{u}$ in $L^2(\Omega_T)$.

For the second assertion, note that  by the strong convergence
of $\pi_n$ to the identity map on $L^2(\O;\S_0)$, %\textbf{check!}
we deduce from the third convergence in \eqref{weak} that
$\pi_n \Delta Q^{(n)}\weakly \Delta Q$.
Since $H(Q)=\Delta Q+L(Q)$, where $L(Q)$ is a polynomial
of degree less than or equal to three in $Q$, cf. \eqref{strongforcing},
we only need to show that $L(Q^{(n)})\weakly_{n\to\infty} L(Q)$ weakly in $L^2(\ts{T})$.
However, since $(L(Q^{(n)}))_{n\in\N}$ is bounded in $L^\infty(0,T;L^2(\O))$ due to $H^1(\O)\hookrightarrow L^6(\O)$ and the $L^\infty(0,T;H^1(\O))$-bound for $Q^{(n)}$, $(L(Q^{(n)}))_{n\in\N}$ possesses a weak limit in $L^2(\ts{T})$ (for a suitable subsequence). This weak limit coincides with $L(Q)$ since $L(Q^{(n)})\to_{n\to\infty} L(Q)$ in $L^1(\ts{T})$ because of
\eqref{strong} with $p=3$.% and Sobolev's embedding $H^1(\O)\hookrightarrow L^6(\O)$.

\medskip

\noindent
\textit{Step~2: Derivation of the equation for $u$.}
We replace $v_k$ in \eqref{aproximatea} by $v\in C^1([0,T];W^{1,\infty}(\O))$ of the form
  \begin{equation}\label{combi1}
    v(t)=\sum_{k=1}^Nd^k(t)v_k
  \end{equation}
and obtain the following equation which holds pointwise for $t\in [0,T]$:
\begin{align*}
&   \barg{ \dt{u}^{(n)},v }_\O + \barg{ (u^{(n)}\cdot \nabla) u^{(n)}, v }_\O
+ \barg{  \nu (Q^{(n)}) \Lsym{u^{(n)}} , \Lsym{v} }_\O
+\barg{ (\pi_n H(Q^{(n)}) ):\nabla Q^{(n)}, v }_\O\\
&\qquad +\barg{  (\sigma+\xi\tau_2)(Q^{(n)}, \pi_n H(Q^{(n)}))
-\frac{2\xi}{d}\pi_n H(Q^{(n)}), \nabla v }_\O =0\,.
\end{align*}
If we choose $d^k(t)$ such that $v|_{t=T}=0$ and integrate this equation in time,
then  an integration by parts for the first term yields
\begin{align} \label{weakforu}
\begin{aligned}
\int^T_0\barg{ - \barg{ u^{(n)},\dt{v} }_\O + \barg{ (u^{(n)}\cdot \nabla) u^{(n)}, v }_\O
+\barg{ \nu (Q^{(n)}) \Lsym{u^{(n)}} , \Lsym{v} }_\O }\dv{t}\\
    +\int_0^T \bsqb{ \barg{ (\sigma+\xi\tau_2)(Q^{(n)}, \pi_n H(Q^{(n)}))
-\frac{2\xi}{d}\pi_n H(Q^{(n)}), \nabla v }_\O }\dv{t}\\
    +\int_0^T\barg{ (\pi_n H(Q^{(n)})):\nabla Q^{(n)}, v }_\O \dv{t}
=\left.\barg{ u^{(n)},v }_\O\right|_{t=0}.
 \end{aligned}
\end{align}
By the convergences \eqref{weak}, \eqref{coincide} and \eqref{strong},
one can pass to the limit $n\to \infty$ in the first integral in \eqref{weakforu}.
It remains to show
\begin{align*}
    \int_{\Omega_T} (\sigma+\xi\tau_2)(Q^{(n)}, \pi_n H(Q^{(n)})) :\nabla v\, \dv(x,t)
&\to\int_{\Omega_T} (\sigma+\xi\tau_2)
\left(Q,  H(Q)\right): \nabla v\,\dv(x,t) \,, \\
    \int_{\Omega_T}(\pi_n H(Q^{(n)}) ):\nabla Q^{(n)}\cdot v\,\dv(x,t)
&\to\int_{\Omega_T}  H(Q): \nabla Q\cdot v\,\dv(x,t)
\end{align*}
as $n\to\infty$.
To prove the second assertion, we use \eqref{weak}, \eqref{strong} and \eqref{coincide} to obtain
\begin{alignat*}{2}
\nabla Q^{(n)}\cdot v & \to_{n\to\infty} \nabla Q\cdot v \quad && \text{ in } L^2(\ts{T})\,,\\
\pi_n H(Q^{(n)}) &\weakly_{n\to\infty} H(Q) && \text{ in } L^2(\ts{T}) \,.
\end{alignat*}
Using the strong convergence of $(Q^{(n)})_{n\in\N}$ in $L^4(\ts{T})$ and the weak convergence of $H(Q^{(n)})$ in $L^2(\ts{T})$ one can easily prove the first assertion since all terms in $\tau_2(Q,H)$ and $\sigma(Q,H)$ are linear with respect to  $H$ and at most quadratic with respect to $Q$.
Hence we conclude
\begin{equation*}
\begin{split}
  &\int_{\Omega_T}\left(-u\cdot \dt{v}+(u\cdot\nabla) u\cdot v
+\nu(Q)  \Lsym{u}:\Lsym{v}+  H(Q):\nabla Q\cdot v\right)\,\dv(x,t)\\
&+\int_{\Omega_T}\left(\barg{ \sigma+\xi\tau_2 }(Q,H(Q))
-\frac{2\xi}{d}H(Q)\right):\nabla v \dv{(x,t)}
=\int_{\O}u_0(x)\cdot v(0,x)\dv{x}
\end{split}
\end{equation*}
for any $v$ of the form \eqref{combi1} with $v(T,\cdot)=0$.
By a density argument, the above equation also holds for any
$v\in C_0^1([0,T);V(\O))$.
This equation together with \eqref{eq:tau1} implies the weak formulation \eqref{weakforufield}.
% In order to prove that $u$ satisfies  the weak formulation \eqref{weakforufield}, it remains to  show that
% \begin{equation*}
%  -\int_{\Omega_T}\div \tau_1(Q)\cdot v\dv{(x,t)}= \int_{\Omega_T}H(Q):\nabla Q\cdot v\dv{(x,t)}\,.
% \end{equation*}
% However, since $v$ is divergence-free, \eqref{eq:tau1} implies
% \begin{align*}
%  -\int_{\Omega_T}\div \tau_1(Q)\cdot v\dv{(x,t)}
% %&\, =\int_{\Omega_T}\p_j\barg{ \p_iQ_{\a\b}\p_jQ_{\a\b} } v_i\dv{(x,t)}
% %=\int_{\Omega_T} \div(\nabla Q\cdot \nabla Q)\cdot v\dv{(x,t)}
% %\\
% %&\,  =\int_{\Omega_T}\Delta Q:\nabla Q\cdot v\dv{(x,t)}
% %+\underbrace{\int_{\Omega_T}(L(Q):\nabla Q)\cdot v\dv{(x,t)}}_{=0}\\
% &\, =\int_{\Omega_T}(H(Q):\nabla Q)\cdot v\dv{(x,t)}.
% \end{align*}
% \textbf{triple check}

\medskip

\noindent
\textit{Step~3: Derivation of the equation for $Q$. }% and verification of \eqref{weakbc}.}
We replace $e_\ell$ in \eqref{aproximateb} by $\Psi\in C^1([0,T];H^1(\O;\S_0))$
of the form $\Psi(t)=\sum_{\ell=1}^N d^\ell(t)e_\ell$,
integrate in time on $[0,T]$ and integrate by parts in the first term. This yields
\begin{align*}
  -\int_{\Omega_T}  Q^{(n)}:\dt{\Psi} \dv{(x,t)}
+\int_{\Omega_T} (u^{(n)}\cdot\nabla) Q^{(n)}:\Psi\dv{(x,t)}-\int_{\Omega_T} S(\nabla u^{(n)},Q^{(n)}):\Psi\dv{(x,t)}\\
=\int_{\Omega_T}  H(Q^{(n)}):\Psi\dv{(x,t)}+\left.\barg{ Q^{(n)},\Psi }_\Omega\right|_{t=0}.
\end{align*}
Employing \eqref{weak} and \eqref{strong} we conclude
\begin{align*}
  \int_{\Omega_T} S(\nabla u^{(n)},Q^{(n)}):\Psi\dv{(x,t)}\to_{n\to\infty} \int_{\Omega_T}S(\nabla u,Q):\Psi\dv{(x,t)}.
\end{align*}
Hence we can pass to the limit in the equation above.
Through a density argument we obtain the weak formulation \eqref{weakforq}. Finally, the boundary conditions \eqref{mixedboundaryconditions} (for almost every $t$) follow from the fact that $(u^{(n)},Q^{(n)})$ satisfy these boundary conditions and the (weak) continuity of the Dirichlet and Neumann trace operators on $H^1(\Omega)$, $H^2(\Omega)$, respectively.
% Now we verify \eqref{weakbc}.
% By Green's formula and \eqref{series}:
%  \begin{equation*}
%       \begin{split}
%         &\int_{\O}\(\Delta Q^{(n)}:\Psi+\nabla Q^{(n)}:\nabla\Psi\)\\
%         =&\int_{\p\O}\p_nQ^{(n)}:\Psi
%         =\int_{\Gamma_N}\(\p_n\tilde{Q}(x)+\sum^n_{i=1} h_i(t)\p_n e_i(x)\):\Psi=0.
%       \end{split}
%     \end{equation*}
%     In the last step, we used that fact that the normal derivative of $e_i$ and $\tilde{Q}$  vanishes on $\Gamma_N$ and $\Psi|_{\Gamma_D}=0$. Combining the above result with   \eqref{weak} and \eqref{weakstar} yields \eqref{weakbc}.
%%%%%%%%%%%%%%%%%%%%%%%%%%%%%%%%%%%%%%%%%%%%%%%%%%%%%%%%%%%%%%
%%                                                          %%
%% R E G U L A R I T Y  I N  T I M E                        %%
%%                                                          %%
%%%%%%%%%%%%%%%%%%%%%%%%%%%%%%%%%%%%%%%%%%%%%%%%%%%%%%%%%%%%%%

\section{Regularity in time and proof of Theorem~\ref{localstrong}}

The proof of a unique local solution with additional regularity in time is obtained by
Banach's fixed-point theorem. In Section~\ref{fasetup} we define the
function spaces and the operators to which we will apply the
fixed-point theorem, in Section~\ref{linearstuff} we prove that
the linear operator $\mathcal{L}:X_0\to Y_0$ defined in \bref{linop} is bounded,
onto and one-to-one, in
Section~\ref{nonoinearstuff} we verify that the nonlinear operator $\mathcal{N}_0$
in \bref{newnon} is locally Lipschitz continuous with small Lipschitz constant for
$T$ sufficiently small, and in Section~\ref{finallyproof} we give the proof of
Theorem~\ref{localstrong}.
In this section we assume that $(u_0,Q_0)\in \Zspace$. As usual, we
formulate the first equation in \eqref{strongformeqn} weakly by testing with divergence free vector fields. Then we obtain
\begin{align}\label{pressure}
 \dt{u} - P_{\sigma}\div(\nu(Q) \Lsym{u})
&\, =P_{\sigma}\div\big(\tau(Q,H(Q))+\sigma(Q,H(Q))-u\otimes u\big)\,,\\
\dt{Q}-\Delta Q &\, = -(u\cdot \nabla) Q +S(\nabla u,Q)+L(Q)\,,
\end{align}
where $P_\sigma\colon H^{-1}(\Omega;\R^d)\to H^{-1}_\sigma(\Omega)$ and $\div\colon L^2(\Omega;\R^{d\times d})\to H^{-1}(\Omega;\R^d)$ are defined as in Section~\ref{sec:Notation}.
%%Once we have a solution of this system, we obtain a solution of \bref{strongformeqn}.

\subsection{Function spaces and operators}\label{fasetup}
The idea is to rewrite the nonlinear system~\bref{strongformeqn} as an
operator equation between suitable Banach spaces.
% see also \eqref{stationaryequ} and \eqref{trace} below
% for the definition of the operators.
We begin with the definition of the linear and the nonlinear operator
in this fixed-point formulation
and use these definitions together with the regularity in time
asserted in Theorem~\ref{localstrong} as motivation for the definition of the
function spaces for the domain and the range of the operators.
% In order to obtain a fix point formulation, w
We linearize the system about the constant trajectory $Q_0$
of the $Q$-tensor. Then the principal part of the linear system is given by  $\mathcal{S}$ and $\mathcal{L}$, where
 \begin{equation}\label{stationaryequ}
  \mathcal{S}(Q_0)\begin{pmatrix} u\\ Q\end{pmatrix}=
\begin{pmatrix}
  P_{\sigma}\div\bsqb{ \nu(Q_0)\Lsym{u}
+ (\sigma+\xi\tau_2)(Q_0,\Delta Q)-\frac{2\xi}{d}\Delta Q } \\
\Delta Q+S(\nabla u,Q_0)
\end{pmatrix},
\end{equation}
and
\begin{equation}\label{linop}
\mathcal{L} (Q_0)\begin{pmatrix} u\\ Q \end{pmatrix}=
\frac{d}{dt}\begin{pmatrix} u\\ Q \end{pmatrix}
-\mathcal{S}(Q_0)\begin{pmatrix} u\\ Q\end{pmatrix}\,,
\end{equation}
respectively. 

 As a result, 
we can consider all the terms in \eqref{pressure} as a functional over $H_{0,\sigma}^1(\O)$ and once we obtain a solution to \eqref{pressure}, we can disregard the $P_{\sigma}$ in \eqref{pressure} by adding a pressure term $\nabla p$ due to standard results.
The nonlinear operator $\mathcal{N}$ in the reformulation of the system of partial
differential equations as the operator equation
$\mathcal{L}(Q_0)(u,Q) = \mathcal{N}(Q_0)(u,Q)$ is given by
\begin{equation*}
\begin{split}
  &\mathcal{N}(Q_0)\begin{pmatrix}
u\\
Q
\end{pmatrix}=
\begin{pmatrix}
P_{\sigma}\div\left[(\nu(Q)-\nu(Q_0))\Lsym{u}+\tau_1(Q)-u\otimes u\right]\\
-(u\cdot \nabla) Q -L(Q)
\end{pmatrix}\\
&+
\begin{pmatrix}
P_{\sigma}\div\bsqb{ (\sigma+\xi\tau_2)(Q,\Delta Q)-(\sigma+\xi\tau_2)(Q_0,\Delta Q)
+(\sigma+\xi\tau_2)(Q,L(Q))-\frac {2\xi}{d}L(Q) }\\
S_1(\nabla u,Q)-S_1(\nabla u,Q_0)+\xi S_2(\nabla u,Q)-\xi S_2(\nabla u,Q_0)
\end{pmatrix}\,.
\end{split}
\end{equation*}
It is also useful to pass to a formulation with homogeneous initial and boundary
conditions.
% Again we will define the precise regularity later.
% Suppose that $(u_0, Q_0)\in \Zspace$ given by the initial conditions~\bref{initialconditions}
% satisfies the mixed boundary conditions~\bref{mixedboundaryconditions}.
Note that \eqref{strongformeqn} together with the initial and
boundary conditions can be
formulated by the operator equation
\begin{equation*}%\label{oper3}
  \mathcal{L}(Q_0)\begin{pmatrix} \uh+u_0\\ \Qh+Q_0 \end{pmatrix}
=\mathcal{N}(Q_0)\begin{pmatrix} \uh+u_0\\ \Qh+Q_0 \end{pmatrix}\,,
% \quad \begin{pmatrix} \uh \\ \Qh \end{pmatrix}\in X_0.
 \end{equation*}
where $(\uh, \Qh)$ satisfies the corresponding homogeneous initial and
boundary conditions. 	
By the definition of the linear operator $\mathcal{L}$ in \eqref{linop},
the above identity is equivalent to
\begin{equation}\label{trans}
  \mathcal{L}(Q_0)
\begin{pmatrix} \uh\\ \Qh \end{pmatrix}=
\mathcal{N}(Q_0)\begin{pmatrix} \uh+u_0\\ \Qh+Q_0 \end{pmatrix}+
\mathcal{S}(Q_0)\begin{pmatrix} u_0\\ Q_0 \end{pmatrix}
% \in Y_0,\quad
% \begin{pmatrix} \uh\\ \Qh \end{pmatrix}\in X_0
 \end{equation}
and the right-hand side defines a nonlinear operator
\begin{equation} \label{newnon}
  \mathcal{N}_0(Q_0)\begin{pmatrix}
\uh\\
\Qh
\end{pmatrix}=\mathcal{N}(Q_0)\begin{pmatrix}
\uh+u_0\\
\Qh+Q_0
\end{pmatrix}+ \mathcal{S}(Q_0)\begin{pmatrix}
u_0\\
Q_0
\end{pmatrix}.
\end{equation}
We now turn to the definition of functions spaces $X_0$ and $Y_0$ such that
$\mathcal{L},\,\mathcal{N}_0:X_0\to Y_0$ with $\mathcal{L}$ an isomorphism.
Motivated by the idea to construct solutions which are
twice differentiable in time and
the precise assertions in Theorem~\ref{localstrong},
we  define the function space
for the range of the operators by
\begin{align*}
  Y_u =H^1(0,T;\VNSprimeO),\quad  Y_Q=H^1(0,T;L^2(\O;\S_0))\,.
\end{align*}
In particular, we need to prove regularity of solutions of the linear equation
$\mathcal{L}(Q_0)(\uh,\Qh) = (f,g)$ with right-hand side $(f,g)\in Y_0$ subject to
homogeneous initial data. The general linear theory requires a compatibility condition
which is taken care of by the definition
% \textbf{is that true, needs $S_0$, define here $H$, see below????}
of $Y_0$ as
\begin{align} \label{Y0}
  Y_0&=\left\{ (f,g)\in Y_u\times Y_Q:
(  f,  g)|_{t=0}\in \HNSO
\times H^1_{\Gamma_D}(\O) \right\}\,.
\end{align}
These spaces are equipped with the usual norms in product spaces and for spaces
of functions of one variable with values in a Banach space together with the
correct norm of the initial data. 
More precisely, the norm of $Y_0$ is given by
\begin{equation}\label{norm}
  \|(f,g)\|_{Y_0}
=\left(\|(f,g)\|_{Y_u\times Y_Q}^2+\|(f,g)|_{t=0}\|_{\HNSO\times H^1(\O)}^2\right)^{\frac12}\,.
\end{equation}
Note that the second part of the norm is not
controlled by trace theorems applied to $Y_u\times Y_Q$.
The domains of the operators are given by the Banach spaces
\begin{alignat*}{3}
X^1_u&=  H^2(0,T;\VNSprimeO)\,,\quad
 &X^2_u&=H^1(0,T;\VNSO)\,,\quad &X_u&=X^1_u \cap X_u^2\,,\\
X^1_Q&= H^2(0,T;L^2(\O;\S_0))\,, &X^2_Q&= H^1(0,T;H^2(\O;\S_0))\,, &X_Q&=X_Q^1\cap X_Q^2
\end{alignat*}
together with the norms
\begin{align}\nonumber
  \|u\|_{X_u}&=\left(\|u\|_{X_u^1}^2+\|u\|_{X_u^2}^2+\|u|_{t=0}
  \|_{\VNSO}^2+\|u_t|_{t=0}
  \|_{L^2(\O)}^2\right)^{\frac12}\,,
\\ \label{XQ}
  \|Q\|_{X_Q}&=\left(\|Q\|_{X_Q^1}^2+\|Q\|_{X_Q^2}^2+\|Q|_{t=0}\|_{ H^2(\O)}^2+\|\dt{Q}|_{t=0}\|_{ H^1(\O)}^2\right)^{\frac12}\,.
\end{align}
Note that the last two terms in the norms are important to obtain in the subsequent constants that are uniformly bounded as $T\to 0$, cf. e.g. \eqref{interpo2}.
The corresponding subspaces related to the homogeneous initial and boundary conditions in
the formulation of the problem are defined by
\begin{align}\label{X0}
X_0&= \left\{(u,Q)\in X_u\times X_Q:\mathcal{T}(Q)=
\left( 0,0 \right),\left(u,Q \right)|_{t=0}
=\left(0,0 \right)\right\}\,.
\end{align}
Here the trace operator $\mathcal{T}(Q)$ is given by
\begin{equation}\label{trace}
  \mathcal{T}(Q)=
\left( Q|_{(0,T)\times \Gamma_D}, \p_n Q|_{(0,T)\times\Gamma_N}  \right)\,,
\end{equation}
% where we do not indicate the dependence of $\mathcal{T}$ on the time horizont $T$.
and $X_0$ is equipped with the product norm
\begin{equation*}
  \|(u,Q)\|_{X_0}=\|(u,Q)\|_{X_u\times X_Q}\,.
\end{equation*}
Together with these norms the space $X_0$ and $Y_0$ are closed subspaces of the spaces
$X_u\times X_Q$ and $Y_u\times Y_Q$, respectively.

One can check  the compatibility condition in $\Zspace$ that
the right-hand side of \eqref{trans} belongs to $Y_0$ if $(u_h,Q_h)\in X_0$, cf. the proof of Proposition~\ref{nonlinearterms} (i) below.

\subsection{Existence and uniqueness for the linear system}\label{linearstuff}
The key point in the proof of the local existence of solutions with additional
regularity in time is the verification of global solvability of the linear
system and of its regularity properties. This is achieved based on results
on abstract parabolic evolution equations which we recall for the convenience of
the reader.
Suppose that $\VWloka$ and $\HWloka$ are two separable Hilbert spaces such that
the embedding $\VWloka\hookrightarrow \HWloka$
is injective, continuous, and dense.
Fix $T\in (0,\infty)$. Suppose that for all $t\in [0,T]$ a bilinear
form $a(t;\cdot, \cdot): \VWloka\times \VWloka\to \R$ is given which satisfies
for all $\phi$, $\psi\in \VWloka$ the following assumptions:
\begin{itemize}
 \item [(a)] $a(\cdot ;\phi, \psi)$ is measurable on $[0,T]$;

 \item [(b)] there exists a constant $c>0$, independent of $t, \phi$  and $\psi$, with
\begin{align*}
 \bbar{ a(t;\phi, \psi) } \leq c \| \phi\|_\VWloka\| \psi\|_\VWloka
\quad\text{ for all }t\in [0,T];
\end{align*}

\item [(c)] there exist $k_0$, $\alpha\geq 0$ independent of $t$ and $\phi$,
with
\begin{align*}
  a(t;\phi, \phi) + k_0 \| \phi\|_\HWloka^2 \geq \alpha \| \phi\|_\VWloka^2
\quad \text{ for all }t\in [0,T]\,;
\end{align*}

 \item [(d)] $a(\ \cdot\ ;\phi, \psi)$ is differentiable,
$a(\ \cdot \ ;\phi, \psi)$ is continuous in $[0,T]$ and
$\partial_t a(t;\phi, \psi)$
is measurable with $|\partial_t^j a(t;\phi, \psi)|
\leq c\| \phi\|_\VWloka\| \psi\|_\VWloka$
for $j=0$, $1$ with $c$ independent of $t$.
\end{itemize}

\begin{theorem}\label{Wlokaregularity}
Suppose that (a)--(c) hold.
Then there exists a representation operator $L(t): \VWloka \to \VWloka^\prime$
with $a(t;\phi, \psi) = \scp{ L(t)\phi}{\psi }_{\VWloka^\prime,\VWloka}$,
which  is continuous and linear for fixed $t$. Moreover, for all
$f\in L^2((0,T);\VWloka^\prime)$ and $y_0\in \HWloka$, there exists a
unique solution
\begin{align*}
 y\in \bset{ v:[0,T]\to \VWloka \text{ with }v\in L^2(0,T;\VWloka),\,
\dt{v}\in L^2(0,T;\VWloka^\prime) }
\end{align*}
which solves the equation
\begin{align*}
% \frac{ {\rm d}y }{ {\rm d}t }
\dt{y}+  L(t) y = f \quad \text{ in } \VWloka^\prime\ \text{for a.e. }t\in (0,T)\,,
\end{align*}
subject to the initial condition $y(0) = y_0$.
Finally, assume additionally that (d) holds and that $y_0 \in \VWloka$. Then
$L: \SonetwoWloka((0,T);\VWloka) \to  \SonetwoWloka((0,T);\VWloka^\prime)$
is continuous and for all $f\in \SonetwoWloka((0,T);\VWloka^\prime)$
which satisfy the compatibility condition $f(0)-L(0)y_0\in \HWloka$ %% \VWloka$
the solution $y$ satisfies
\begin{align*}
 y\in \SonetwoWloka((0,T);\VWloka)\quad \text{ and }\quad
 \dtt{y}\in L^2\barg{(0,T);\VWloka^\prime}\,.
\end{align*}

\end{theorem}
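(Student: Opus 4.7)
My plan is to treat the three assertions in order, exploiting the Hilbert-space structure throughout. First, the representation operator $L(t)$ is constructed by Lax--Milgram/Riesz: by assumption (b), for each fixed $t\in[0,T]$ and $\phi\in\mathbb{V}$ the map $\psi\mapsto a(t;\phi,\psi)$ is a bounded linear functional on $\mathbb{V}$ and therefore defines a unique element $L(t)\phi\in\mathbb{V}'$; boundedness (b) yields $\|L(t)\|_{\mathcal{L}(\mathbb{V},\mathbb{V}')}\leq c$ uniformly in $t$, while measurability of $t\mapsto L(t)\phi$ in $\mathbb{V}'$ follows from (a) together with separability of $\mathbb{V}$.

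For the first existence/uniqueness claim I would carry out a Galerkin scheme. Choose a basis $(\phi_k)_{k\in\mathbb{N}}\subset\mathbb{V}$ that is orthonormal in $\mathbb{H}$, project onto $V_n=\operatorname{span}(\phi_1,\dots,\phi_n)$, and solve the resulting linear ODE system with measurable coefficients (by (a), (b)) to obtain an absolutely continuous $y_n\in H^1(0,T;V_n)$ with $y_n(0)$ equal to the $\mathbb{H}$-projection of $y_0$. Testing against $y_n$ and invoking the shifted coercivity (c) together with Gronwall's lemma yields a uniform bound of $y_n$ in $L^2(0,T;\mathbb{V})\cap L^\infty(0,T;\mathbb{H})$; the equation itself then gives a uniform bound of $\partial_t y_n$ in $L^2(0,T;\mathbb{V}')$. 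Weak/weak-$\ast$ compactness extracts a limit $y$ of the required regularity, and the standard embedding $L^2(0,T;\mathbb{V})\cap H^1(0,T;\mathbb{V}')\hookrightarrow C([0,T];\mathbb{H})$ allows one to identify $y(0)=y_0$. Uniqueness follows from testing the difference of two solutions against itself.

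For the higher-regularity part I would remain at the Galerkin level, where $y_n$ is automatically smoother in time because (d) upgrades the coefficients to $H^1(0,T)$ and $f$ projected onto $V_n$ lies in $H^1(0,T;V_n')$. Differentiating the Galerkin equation in $t$ and setting $z_n=\partial_t y_n$ yields
\[
\partial_t z_n + L_n(t) z_n \;=\; \partial_t f_n - (\partial_t L_n)(t)\, y_n \qquad \text{in } V_n',
\]
with initial datum read off from the original Galerkin equation at $t=0$, namely $z_n(0)=f_n(0)-L_n(0)y_n(0)$. Here the compatibility hypothesis $f(0)-L(0)y_0\in\mathbb{H}$ together with $y_0\in\mathbb{V}$ ensures a uniform $\mathbb{H}$-bound on $z_n(0)$. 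Testing this new equation against $z_n$ and using (c), the first-part bound on $y_n$ in $L^2(0,T;\mathbb{V})$, and the control on $\partial_t L_n$ supplied by (d), Gronwall gives uniform bounds of $z_n$ in $L^2(0,T;\mathbb{V})\cap L^\infty(0,T;\mathbb{H})$ and of $\partial_t z_n$ in $L^2(0,T;\mathbb{V}')$. Passing to the weak limit and identifying $\partial_t y$ with $\lim z_n$ delivers $y\in H^1(0,T;\mathbb{V})$ and $\partial_{tt} y\in L^2(0,T;\mathbb{V}')$.

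The principal obstacle is the last step. The differentiated equation requires absorbing the forcing term $(\partial_t L)(t)\,y$, which is only in $L^2(0,T;\mathbb{V}')$ (not $L^2(0,T;\mathbb{H})$), so the estimate for $z_n$ must be done against a $\mathbb{V}$-valued test function and relies essentially on the coercivity (c); and the initial estimate on $z_n(0)$ must be \emph{in $\mathbb{H}$}, which is precisely the role of the compatibility condition $f(0)-L(0)y_0\in\mathbb{H}$. Without this condition one would obtain only a bound of $z_n(0)$ in $\mathbb{V}'$, which is insufficient to run the $\mathbb{H}$-valued energy argument that produces the extra spatial regularity.
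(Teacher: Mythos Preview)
Your outline is the standard Galerkin argument and is essentially correct; note, however, that the paper does not give its own proof of this theorem but simply cites Wloka's textbook (Lemma~26.1 and Theorem~27.2), and what you have sketched is precisely the approach taken there. One technical point worth tightening in your higher-regularity step: with $y_n(0)$ chosen as the $\mathbb{H}$-projection of $y_0$, you do not automatically get $y_n(0)\to y_0$ in $\mathbb{V}$, which is needed to transfer the compatibility condition $f(0)-L(0)y_0\in\mathbb{H}$ to a uniform $\mathbb{H}$-bound on $z_n(0)=f_n(0)-L_n(0)y_n(0)$. The standard remedy (and the one used in Wloka) is either to take $y_n(0)$ as the $\mathbb{V}$-orthogonal projection of $y_0$, or to work with a basis of eigenfunctions of the Riesz isomorphism $\mathbb{V}\to\mathbb{V}'$ so that the $\mathbb{H}$- and $\mathbb{V}$-projections coincide on $V_n$; either choice makes your argument go through verbatim.
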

The proof of this theorem can be found in  \cite[Lemma 26.1 and Theorem 27.2]{WlokaPDE}.

The following result establishes the invertibility of the linear operator equation.
Note that we are seeking a solution of the linear equation in $X_0$, i.e., a
solution with homogeneous initial and boundary conditions.

\begin{prop}[Homogeneous linear system]
\label{linearizedprop}
Let $T\in (0,1]$. Then $\mathcal{L}:X_0\to Y_0$, and
for every $(f,g)\in Y_0$, the operator equation
\begin{align*}
   \mathcal{L}(Q_0) (u,Q) = (f,g)
\end{align*}
has a unique solution $(u,Q)\in X_0$ satisfying
\begin{equation}\label{stronginverse}
\| \mathcal{L}^{-1}(Q_0)(f,g)\|_{X_0}=
\|(u,Q)\|_{X_0}\leq C_{\mathcal{L}}\|(f,g)\|_{Y_0}
\end{equation}
where $C_{\mathcal{L}}$ is independent of $T\in(0,1]$ and $(f,g)\in Y_0$.
In particular $\mathcal{L}(Q_0):X_0\to Y_0$ is invertible and
$\mathcal{L}^{-1}(Q_0)$ is a bounded linear operator with norm independent of $T\in(0,1]$.
\end{prop}

\begin{proof}
The idea is to apply Theorem~\ref{Wlokaregularity} and we carry out this program
in the subsequent steps.

\medskip

\noindent
\textit{Step 1: Function spaces.} Since the regularity is in time, we only
need to incorporate the regularity in space into the spaces $\VWloka$ and $\HWloka$.
We define the Hilbert spaces
\begin{align*}
 \HWloka  = \HWloka_1 \times \HWloka_2
& =\HNSO\times H^1_{\Gamma_D}(\Omega;\S_0) \,,\\
\VWloka   = \VWloka_1 \times \VWloka_2
& = \VNSO \times
\bset{ Q\in H^2(\Omega;\S_0)\cap H^1_{\Gamma_D}(\Omega;\S_0),
\partial_n Q|_{\Gamma_N}=0 }
\end{align*}
and equip them with the inner products
\begin{align*}
  ((u,Q),(v,P))_\HWloka
&\, = (u,v)_{L^2(\Omega)}+ (Q,P)_{H^1(\Omega)}\quad\text{ for all }(u,Q),(v,P)\in \HWloka\,,\\
((u,Q),(v,P))_\VWloka
&\,=(u,v)_{H^1(\Omega)} + (Q,P)_{H^2(\Omega)}\quad\text{ for all }(u,Q),(v,P)\in \VWloka\,.
\end{align*}
Here the inner product in the Sobolev spaces
$H^k$, $k\geq 1$, is the usual inner product in these spaces. The spaces
$\HWloka$ and $\VWloka$ are Hilbert spaces. 

Recall that $\VWloka_2\hookrightarrow \HWloka_2\cong \HWloka_2' \hookrightarrow \VWloka_2'$, where $\HWloka_2$ is identified with $\HWloka_2'$ via the Riesz isomorphism
$P\mapsto (\nabla P,\nabla \cdot)_{L^2(\Omega)}+ (P,\cdot)_{L^2(\Omega)}$. This implies that
for all $P,\Phi\in \VWloka_2$
\begin{align}\label{indentifyprod}
\weight{P,\Phi}_{\VWloka_2',\VWloka_2}= (P,\Phi)_{\HWloka_2}
= \int_{\Omega} \nabla P:\nabla \Phi \dv{x} + \int_{\Omega} P: \Phi \dv{x}\, 
= \int_\Omega P:(I- \Delta) \Phi\dv{x}.
\end{align}

\medskip

\noindent
\textit{Step 2: Operators and bilinear forms.} The most subtle point is the correct
definition of the bilinear form $a$ since the natural bilinear form
associated with $\mathcal{S}$ given by 
\begin{align*}
\bscp{ L(v, P)}{(\varphi,\Phi)}_{\VWloka',\VWloka}
=& \int_\Omega \nu(Q_0)\Lsym{v}:\Lsym{\varphi}\dv{x}
  +\int_\Omega ((\sigma+\xi \tau_2)(Q_0,\Delta P)
-\tfrac{2\xi}{d} \, \Delta P):\nabla{\varphi}\dv{x}\\
&\qquad - \int_\Omega (\Delta P + S(\nabla v,Q_0)):\Phi \dv{x}
\end{align*}
does not lead to a bilinear form which is coercive on $\VWloka$. This
is achieved by taking advantage of the cancellation property in \eqref{tensor4} and
by defining a bilinear form which is independent of time by
\begin{align*}
\bscp{ \tildeL (v, P)}{(\varphi,\Phi)}_{\VWloka',\VWloka}
=& \int_\Omega \nu(Q_0)\Lsym{v}:\Lsym{\varphi}\dv{x}
  +\int_\Omega ((\sigma+\xi \tau_2)(Q_0,\Delta P)-\tfrac{2\xi}d\Delta P)
:\nabla{\varphi}\dv{x}\\
&\qquad - \int_\Omega (\Delta P + S(\nabla v,Q_0)):(I-\Delta)\Phi \dv{x}
\end{align*}
for all $(v,P),(\varphi,\Phi)\in \VWloka$. The additional term in the equation
has to be compensated for on the right-hand side of the linear system and therefore
we associate to $(f,g)\in Y_0$ the element
$(F,\tildeG)\in L^2\barg{ (0,T);\VWloka^\prime }$ by
\begin{align*}
 \scp{(F(t),\tildeG(t))}{(\phi, \Phi)}_{\VWloka',\VWloka}
=\int_\Omega \barg{ f(t)\cdot \phi + g(t):(I-\Delta)\Phi }\dv{x}
\end{align*}
for all $(\phi, \Phi)\in \VWloka$ and almost all $t\in (0,T)$.
% for all $(\phi, \Phi)\in L^2\barg{ (0,T);\VWloka}$.
We now assert that
the solution of the abstract evolution equation
\begin{align}\label{eq:AbstractEq}
 \bscp{(\dt{u},\dt{Q})}{(\phi,\Phi)}_{\VWloka',\VWloka}
+\bscp{ \tildeL (u, Q)}{(\varphi,\Phi)}_{\VWloka',\VWloka}
=  \scp{(F,\tildeG)}{(\phi, \Phi)}_{\VWloka',\VWloka}
\end{align}
for all $(\varphi,\Phi)\in \VWloka$
subject to the initial condition $(u(0), Q(0))=(0,0)\in \HWloka$
is indeed a weak solution of the linear evolution equation.
% Finally, \eqref{eq:AbstractEq} is equivalent to $\mathcal{L}(Q_0)
% \begin{pmatrix}
%   u\\ Q
% \end{pmatrix} =
% \begin{pmatrix}
%   F\\ G
% \end{pmatrix}
% $.
The choice of $(\phi,0) \in \VWloka$ implies the correct equation for $u$.
To identify the equation for $Q$, choose $(0,\Phi) \in \VWloka$ as
test function and obtain  by~\bref{indentifyprod}
\begin{align*}
% \MoveEqLeft
\int_\Omega g(x,t):(I-\Delta) \Phi(x)\dv{x}
 &=
\weight{\dt{Q},\Phi}_{\VWloka_2',\VWloka_2}
- \int_\Omega (\Delta Q+S(\nabla u,Q_0)): (I- \Delta) \Phi \dv{x}
\\ &=  \int_\Omega (\dt{Q}- \Delta Q-S(\nabla u,Q_0)):(I- \Delta) \Phi \dv{x}\,.
\end{align*}
Since $(I-\Delta)\colon \VWloka_2\to L^2(\Omega;\S_0)$ is bijective,
cf. e.g.~\cite[Theorems 4.10 and 4.18]{McLean},
we conclude
\begin{equation*}
  \partial_t Q -\Delta Q -S(\nabla u,Q_0) = g \qquad \text{a.e. in } \Omega\times (0,T).
\end{equation*}

\medskip

\noindent
\textit{Step 3: Existence of time-regular solutions.}
The existence of time-regular solutions follows from Theorem~\ref{Wlokaregularity}
once we have verified the assumptions (a)--(d) on the bilinear form
and the regularity assumptions on the right-hand side.
Since $a$ does not depend on time, (a) and (d) are immediate. By Sobolev's embedding theorem,
$H^2\hookrightarrow C^0$, hence $Q_0\in L^\infty$ and (b) follows
from~\bref{viscosity} and H\"older's inequality.
Moreover, in view of the cancellation property \eqref{tensor4}, Korn's inequality and
Young's inequality,
\begin{align*}
\bscp{ L(v, P)}{(v, P)}_{\VWloka',\VWloka}
   &= \int_\Omega \nu(Q_0)\Lsym{v}:\Lsym{v}\dv{x} + \int_\Omega |\Delta P|^2 \dv{x}-
\int_\Omega (\Delta P + S(\nabla v,Q_0)):P \dv{x}\\
&\geq c_0\|(v,P)\|_{\VWloka}^2 - C\|(v,P)\|_{\HWloka}^2
\end{align*}
for all $(v,P)\in \VWloka$, $t\in [0,T]$, and suitable constants $c_0$, $C>0$.
Therefore (c) is satisfied. Finally we obtain
by the definition of $Y_0$ that $(f,g) \in Y_0$ is equivalent to
$(F,\tildeG)\in \SonetwoWloka(0,T;\VWloka^\prime)$ and $(F(0), G(0)) \in \HWloka'\cong\HWloka$.
Hence there exists a unique solution
$(u,Q)\in \StwotwoWloka(0,T;\VWloka')\cap \SonetwoWloka(0,T;\VWloka)$ of the
abstract evolution equation
and therefore for the linear equation. %\textbf{check $G(0)$ in the right space}

\medskip

\noindent
\textit{Step 4: $\mathcal{L}$ is a bounded isomorphism.}
The only regularity statement which does not follow from the regularity
of the solution in Step~3 is the assertion $Q\in H^2(L^2)$. Note that the
right-hand side in the equation for $Q$ belongs to $H^1(L^2)$. Therefore
$\dt{Q}\in H^1(L^2)$ and $Q\in H^2(L^2)$.

Altogether, we have proven that $\mathcal{L}(Q_0)\colon X_0\to Y_0$
is an isomorphism.
The boundedness of the operator norm of
$\mathcal{L}(Q_0)^{-1}\colon Y_0\to X_0$ uniformly in $0<T\leq 1$ 
can be shown as follows. By a standard energy estimate,
i.e., by taking the duality product of \eqref{eq:AbstractEq} and $(u,Q)^T$
and integration in time, we derive 
\begin{equation*}
  \sup_{0\leq t\leq T} \|(u(t),Q(t))\|_{\HWloka}^2
+ c_0\int_0^T\|(u(t),Q(t))\|_\VWloka^2 \sd t \leq C\|(F,G)\|_{L^2(0,T;\VWloka')}^2
\end{equation*}
with constants $c_0,C$ independent of $T>0$. Moreover, if we
differentiate \eqref{eq:AbstractEq} with respect to $t$ and
take the duality product with $(\partial_t u,\partial_t Q)^T$, then we discover
\begin{eqnarray*}
  \lefteqn{\sup_{0\leq t\leq T} \|(\partial_t u(t),\partial_t Q(t))\|_{\HWloka}^2
 + c_0\int_0^T\|(\partial_t u(t),\partial_t Q(t))\|_\VWloka^2 \sd t}\\
 &\leq& C\barg{ \|(\partial_t F,\partial_t G)\|_{L^2(0,T;\VWloka')}^2
 + \|(\partial_t u(0), \partial_t Q(0))\|_{\HWloka} } . %\\
%&& \left. + \left|\int_0^T \left\langle \left(\frac{d}{dt}A(t)\right)
%        \begin{pmatrix}
%          u(t)\\ Q(t)
%        \end{pmatrix},
%        \begin{pmatrix}
%          \partial_tu(t)\\ \partial_tQ(t)
%        \end{pmatrix}\right\rangle_{\VWloka',\VWloka}\right|\right)
\end{eqnarray*}
By the previous estimate, Young's inequality, and \eqref{eq:AbstractEq} for $t=0$ we conclude
\begin{eqnarray*}
  \lefteqn{\sup_{0\leq t\leq T} \|(\partial_t u(t),\partial_t Q(t))\|_{\HWloka}^2
 + c_0\int_0^T\|(\partial_t u(t),\partial_t Q(t))\|_\VWloka^2 \sd t}\\
 &\leq& C\barg{ \|(F, G)\|_{H^1(0,T;\VWloka')}^2 + \|(F(0), G(0))\|_{\HWloka}^2 }
= C\|(F,G)\|_{Y_0}^2
\end{eqnarray*}
for all $0<T\leq 1$. Finally,
second order time derivatives of \eqref{eq:AbstractEq} imply
the same estimate for $(\partial_t^2 u,\partial_t^2 Q)\in L^2(0,T;\VWloka')$.
The foregoing estimates can be summarize by
\begin{equation*}
  \|(u,Q)\|_{X_0} \leq C\|(F,G)\|_{Y_0}
\end{equation*}
for all $T\in (0, 1]$.
\end{proof}

\subsection{Local Lipschitz continuity of the nonlinear operator}\label{nonoinearstuff}

In this section we analyze the nonlinear terms.
The fundamental properties of the nonlinear operator are summarized
in the following proposition.

\begin{prop}\label{nonlinearterms}
Fix $0<T\leq 1$, $R>0$, $(u_0, Q_0)\in \Zspace$,
let $\mathcal{N}_0(Q_0)$ be the nonlinear operator defined
in \eqref{newnon}, and recall that
$\BXzeroR =\{(v,P)\in X_0,\, \|(v,P)\|_{X_0}\leq R\}$.
Then the following assertions are true
for all $(\uhi,\Qhi)\in \BXzeroR$, $i=1,\,2$:

\begin{itemize}
 \item [(i)] $\mathcal{N}_0(Q_0)$ maps $X_0$ to $Y_0$.

\item [(ii)] Local Lipschitz continuity: There exists a constant
$C_{\mathcal{N}_0}(T,R,Q_0,u_0)>0$ such that
\begin{align}\label{lip}
\begin{aligned}
&\|\mathcal{N}_0(Q_0)(\uhone,\Qhone)-\mathcal{N}_0(Q_0)(\uhtwo,\Qhtwo)\|_{Y_0} \\
&\,\qquad\qquad
\leq  C_{\mathcal{N}_0}(T,R,Q_0,u_0)\|(\uhone-\uhtwo,\Qhone-\Qhtwo)\|_{X_0}\,.
\end{aligned}
\end{align}

\item [(iii)] Local boundedness:
There exists a constant $\CR(u_0,Q_0)>0$ independent of\, $T$ and $R$ such that
\begin{equation}\label{bounded}
\|\mathcal{N}_0(Q_0)(\uhone,\Qhone)\|_{Y_0}
\leq C_{\mathcal{N}_0}(T,R,Q_0,u_0)\|(\uhone,\Qhone)\|_{X_0}+
\| \mathcal{E}(u_0, Q_0)\|_{Y_0}\,.
\end{equation}
%\textbf{better $\CR = $?}

\item [(iv)]
For $R>0$ fixed we have $\lim_{T\to 0}C_{\mathcal{N}_0}(T,R,Q_0,u_0)= 0$.
\end{itemize}
\end{prop}

\begin{proof}
We divide the proof into several steps.
In order to simplify the presentation,
the dependence of the generic constant $C$ on $\O$, $\xi$, $\nu$
and $(u_0,Q_0)$  will be neglected. Moreover, we will skip the time interval $(0,T)$ and domain $\Omega$ in the vector-valued functions spaces for better readability, e.g. we denote $L^p(L^q)=L^p(0,T;L^q(\Omega))$. For any function $F:\R^k\to \R^\ell$ with $k$, $\ell\in \N$, and any
points $a_1$, $a_2\in \R^k$ we define
\begin{align*}
 \llbracket F(a) \rrbracket = F(a_1) - F(a_2)\,.
\end{align*}
Note that by the definitions of $P_\sigma\colon H^{-1}(\Omega;\R^d)\to H^{-1}_\sigma(\Omega)$ and $\diverg \colon L^2(\Omega;\R^{d\times d})\to H^{-1}(\Omega;\R^d)$ we can estimate
the $H^1(\VNSprime)$ norm of the divergence of the  difference of the fields
in the first component in $\mathcal{N}_0$ in $Y_u$ by their $H^1(L^2)$-norm.
Therefore all estimates in the proof of the Lipschitz continuity involve the
$H^1(L^2)$-norm and will be accomplished based on the fact that most of
the expressions are bilinear or trilinear.

\medskip

\noindent
\textit{Proof of (i): The range of $\mathcal{N}_0$ lies in $Y_0$.}
Fix $(\uh, \Qh)\in X_0$. The compatibility condition for
the initial conditions of elements in $Y_0$ in \eqref{Y0} follows
from $(u_0, Q_0)\in \Zspace$, \eqref{X0} and \eqref{newnon} since for all
$(\uh, \Qh)\in Z_0$,
\begin{align*}
\mathcal{N}_0(Q_0)\begin{pmatrix} \uh\\ \Qh \end{pmatrix}\bigg|_{t=0}
&=\mathcal{N}(Q_0)\begin{pmatrix} \uh+u_0\\ \Qh+Q_0 \end{pmatrix}\bigg|_{t=0}
+ \mathcal{S}(Q_0)\begin{pmatrix} u_0\\ Q_0 \end{pmatrix} \\
&=\mathcal{N}(Q_0)\begin{pmatrix} u_0\\ Q_0 \end{pmatrix}\bigg|_{t=0}
+ \mathcal{S}(Q_0)\begin{pmatrix} u_0\\ Q_0 \end{pmatrix} \\
&=\mathcal{E} \begin{pmatrix} u_0\\ Q_0 \end{pmatrix}
\in L^2_{\sigma}(\O)\times
H^1_{\Gamma_D}(\O)\,.
% \text{ for all }
% \begin{pmatrix} \uh\\ \Qh \end{pmatrix}\in X_0\,.
\end{align*}
Moreover, $\mathcal{N}_0(Q_0)(\uh, \Qh)\in Y_u\times Y_Q$ follows by inspection of all terms in the
definition of $\mathcal{N}_0$ in the same way as in the proof of (ii). %\textbf{check!}

\medskip

\noindent
\textit{Proof of (ii): Local Lipschitz continuity of $\mathcal{N}_0$.}
Let 
\begin{align*}\label{xt}
X_T&= \left\{(u,Q)\in X_u\times X_Q:\mathcal{T}(Q)=
\left( Q_D, Q_N \right),
\left(u,Q \right)|_{t=0}=\left(u_0,Q_0 \right)\right\}\,.
\end{align*}
We define $(u_i,Q_i)=(\uhi+u_0,\Qhi+Q_0)\in X_T$
and $(\overline{u},\overline{Q})=(u_1-u_2,Q_1-Q_2)\in X_0$,
where we identify as usual a function independent of $t$ with its
extension to $(0,T)$ as a constant function. By definition,
\begin{align*}
 \llbracket \mathcal{N}_0(Q_0)(\uh, \Qh) \rrbracket
= \llbracket \mathcal{N}(Q_0)(u, Q) \rrbracket
\end{align*}
and since $\mathcal{N}$
involves only spatial derivatives, we infer
 \begin{equation*}
    \mathcal{N}(Q_0)(u_1,Q_1)\big|_{t=0}-\mathcal{N}(Q_0)(u_2,Q_2)\big|_{t=0}=0\,.
 \end{equation*}
% Hence  \eqref{norm} and \eqref{newnon} implies that
Hence \eqref{lip} is equivalent to
  \begin{equation}\label{lipnew}
    \|\mathcal{N}(Q_0)(u_1,Q_1)-\mathcal{N}(Q_0)(u_2,Q_2)\|_{Y_u\times Y_Q}
    \leq C_{\mathcal{N}_0}(T,R,Q_0,u_0)\|(\bar{u},\bar{Q})\|_{X_0}
    \end{equation}
for all $(u_i,Q_i)\in X_T$ such that
\begin{equation}\label{distance}
\|(u_i,Q_i)-(u_0,Q_0)\|_{X_0}\leq R\,.
\end{equation}
The proof of the Lipschitz continuity
requires
additional estimates and is therefore divided
into several steps in which we estimate the differences between the
various terms in the operators.

\medskip

\noindent
\textit{Step 1: Uniform bounds.}
We have for $i=1$, $2$ uniform bounds in space--time,
\begin{align}\label{f1}
\begin{aligned}
\|Q_i-Q_0\|_{L^{\infty}(\Omega_T)}^2
+ \|\dt{Q}_{i}-\dt{Q}_{0}\|^2_{L^2(L^{\infty})} %+ \|\dt{Q}_{i}\|^2_{L^2(L^{\infty})}
 &\, \leq CT^{\frac 14}R^2\,,\\
\|\bar{Q}\|_{L^{\infty}(\Omega_T)}^2
+\|\dt{\bar{Q}}\|^2_{L^2(L^{\infty})}
&\,  \leq CT^{\frac 14}\|\bar{Q}\|^2_{X_Q} \,,
\end{aligned}
\end{align}
as well as uniform bounds in time  for higher-order norms in space,
\begin{align}\label{f2}
\begin{aligned}
\|   Q_i\|_{L^{\infty}(H^k)}
&\, \leq C\big(R+\|Q_0\|_{H^k}\big),&&\quad k\in \{0,1,2\}\,,\\
\|   u_i\|_{L^{\infty}(H^k)}
&\, \leq C\big(R+\|u_0\|_{H^k}\big),&&\quad k\in \{0,1\}\,, \\
\| \bar{u}\|_{L^{\infty}(H^1)}
+\|\bar{Q}\|_{L^{\infty}(H^2)}
&\, \leq C\barg{ \|\bar{u}\|_{X_u} + \|\bar{Q}\|_{X_Q}}\,. &&
%
% \|\bar{Q}\|_{L^{\infty}(H^k)}
% &\, \leq C\|\bar{Q}\|_{X_Q},\quad \,k\in \{0,1,2\}\,, \\
% \| \bar{u}\|_{L^{\infty}(H^k)}
% &\, \leq C\|\bar{u}\|_{X_u}\,,\quad k\in \{0,1\}\,.
\end{aligned}
\end{align}
Note carefully, that the constants are independent of $T$.
More precisely, by the interpolation result \eqref{interpo1},
 \begin{equation}
 \|Q_i-Q_0\|_{L^{\infty}(\Omega_T)}
\leq C\|Q_i-Q_0\|_{L^{\infty}(H^1)}^{\frac 12}
\|Q_i-Q_0\|_{L^{\infty}(H^2)}^{\frac 12}\,.
\end{equation}
We apply \eqref{interpo2} with $H=H_1=H^k(\O)$, $0\leq k\leq 2$
to $Q_i-Q_0$, observe that the term related to the initial conditions vanishes since
$Q_i|_{t=0}=Q_0$ and obtain for $i=1,\,2$
\begin{equation}\label{f1axx}
  \begin{split}
&\|Q_i-Q_0\|_{L^{\infty}(H^1)}\|Q_i-Q_0\|_{L^{\infty}(H^2)}\\
&\,\quad      \leq  C\|Q_i-Q_0\|_{L^2(H^1)}^{\frac 12}\|Q_i-Q_0\|_{H^1(H^1)}^{\frac 12}\|Q_i-Q_0\|_{L^2(H^2)}^{\frac 12}\|Q_i-Q_0\|^{\frac 12}_{H^1(H^2)}\\
&\,\quad      \leq  CT^{\frac 14}\|Q_i-Q_0\|_{L^{\infty}(H^1)}^{\frac 12}\|Q_i-Q_0\|_{H^1(H^1)}^{\frac 12}\|Q_i-Q_0\|_{H^1(H^2)}\\
&\,\quad      \leq  CT^{\frac 14}\|Q_i-Q_0\|^2_{H^1(H^2)}
\leq  CT^{\frac 14}R^2\,,
\end{split}
\end{equation}
where we used \eqref{distance} in the last step. The uniform bound for $Q_i-Q_0$
in~\bref{f1} follows immediately. To estimate the time derivatives of $Q_i-Q_0$,
we use H\"{o}lder's inequality and \eqref{interpo3} and find
for $i=1,\,2$ that
\begin{equation}\label{qtest}
\begin{split}
       &\|\dtind{Q}{i}-\dtind{Q}{0}\|^2_{L^2(L^{\infty})}\\
        &\leq C\|\dtind{Q}{i}-\dtind{Q}{0}\|_{L^2(L^2)}^{\frac 12}
\|\dtind{Q}{i}-\dtind{Q}{0}\|^{\frac 32}_{L^2(H^2)}\\
        &\leq CT^{\frac 14}\|\dtind{Q}{i}-\dtind{Q}{0}\|_{L^{\infty}(L^2)}^{\frac 12}
\|\dtind{Q}{i}-\dtind{Q}{0}\|^{\frac 32}_{L^2(H^2)}\\
        &\leq CT^{\frac 14}\barg{ \|\dtind{Q}{i}-\dtind{Q}{0}\|_{H^1(L^2)}^{\frac 12}
+\|(\dtind{Q}{i}-\dtind{Q}{0})|_{t=0}
\|_{L^2(\O)}^{\frac 12} }\|\dtind{Q}{i}-\dtind{Q}{0}\|^{\frac 32}_{L^2(H^2)}\\
        &\leq CT^{\frac 14}\|Q_i-Q_0\|^2_{X_Q}\leq CT^{\frac 14}R^2\,.
     \end{split}
    \end{equation}
In the last step, we used the definition \eqref{XQ} of the norm in $X_Q$.
The estimates for $\bar{Q}$ are analogous and the proof of~\bref{f1} is complete.
%     In a similar way, we get
%      \begin{equation}\label{qtest1}
%      \|\bar{Q}_t\|^2_{L^2(L^{\infty})}\leq CT^{\frac 14}\|\bar{Q}\|^2_{X_Q}.
%     \end{equation}
To verify~\bref{f2} we employ the
triangle inequality, \eqref{interpo2} and \eqref{distance} and find
for $i=1,\,2$ and $k=0,\,1,\,2$ that
\begin{equation}\label{qiest}
  \begin{split}
    \|   Q_i\|_{L^{\infty}(H^k)}
    &\leq \|Q_i-Q_0\|_{L^{\infty}(H^k)}+\|Q_0\|_{L^{\infty}(H^k)}\\
    &\leq C\big(\|Q_i-Q_0\|_{H^1(H^k)}
+\| (Q_i-Q_0)|_{t=0}\|_{H^k}\big)+\|Q_0\|_{H^k}\\
    &\leq C\big(R+\|Q_0\|_{H^k}\big)
  \end{split}
\end{equation}
and
\begin{equation}\label{qbarest}
   \|\bar{Q}\|_{L^{\infty}(H^k)}\leq C(\|\bar{Q}\|_{H^1(H^k)}
+\|\bar{Q}|_{t=0}\|_{H^k})=C\|\bar{Q}\|_{X_Q}\,.
\end{equation}
The estimates for $\bar{u}$ and $u_i$ are similar and therefore~\bref{f2}
has been established.
% \begin{equation}\label{uiest}
%   \|   u_i\|_{L^{\infty}(H^k)}\leq C\big(R+\|u_0\|_{H^k}\big)
% ,\quad i=1,2;\quad 0\leq k\leq 1.
% \end{equation}
% \begin{equation}\label{ubarest}
%   \|   \bar{u}\|_{L^{\infty}(H^k)}\leq C\|\bar{u}\|_{X_u},\quad 0\leq k\leq 1.
% \end{equation}

\medskip

\noindent
\textit{Step 2: Estimates for differences of viscosities.}
Note that by the fundamental theorem of calculus,
\begin{align*}%\label{diff1}
    \nu(Q_1)-\nu(Q_2)
&=\int^1_0\frac{d}{d\tau}\nu\barg{ \tau Q_1+(1-\tau)Q_2 }\dv{\tau}
% \\&
=\int^1_0(\nabla\nu)\barg{ \tau Q_1+(1-\tau)Q_2 }:\bar{Q}\dv{\tau}\,,
\end{align*}
and by \eqref{f1}  and for $i=1$, $2$,
% \begin{equation}\label{nuest1}
%   \|\nu(Q_1)-\nu(Q_2)\|_{L^{\infty}(\Omega_T)}
% \leq C(R,\nu, Q_0)\|\bar{Q}\|_{L^{\infty}(\Omega_T)}
% \leq C(R,\nu, Q_0) T^{\frac{1}{8}}\|\bar{Q}\|_{X_Q}\,,
% \end{equation}
% analogously for $i=1$, $2$,
% \begin{equation}
%   \|\nu(Q_i)-\nu(Q_0)\|_{L^{\infty}(\Omega_T)}
% \leq C(R,\nu, Q_0)\|Q_i-Q_0\|_{L^{\infty}(\Omega_T)}\leq C(R,\nu, Q_0) T^{\frac {1}{8}}\,.
% \end{equation}
\begin{equation}\label{nuest1}
\begin{aligned}
  \|\nu(Q_1)-\nu(Q_2)\|_{L^{\infty}(\Omega_T)}
&\, \leq C(R,\nu, Q_0)\|\bar{Q}\|_{L^{\infty}(\Omega_T)}
\leq C(R,\nu, Q_0) T^{\frac{1}{8}}\|\bar{Q}\|_{X_Q}\,,
\\
\|\nu(Q_i)-\nu(Q_0)\|_{L^{\infty}(\Omega_T)}
&\, \leq C(R,\nu, Q_0)\|Q_i-Q_0\|_{L^{\infty}(\Omega_T)}
\leq C(R,\nu, Q_0) T^{\frac {1}{8}}\,.
\end{aligned}
\end{equation}
If one differentiates the integral representation, then one finds
\begin{align*}
 &\, \p_t\barg{ \nu(Q_1)-\nu(Q_2) } \\
&\, \quad = \int_0^1\bset{
(\nabla^2 \nu)\barg{ \tau Q_1+(1-\tau)Q_2 }[\bar{Q}, \tau \dtind{Q}{1}+(1-\tau)\dtind{Q}{2}]
+(\nabla\nu)\barg{ \tau Q_1+(1-\tau)Q_2 } : \bar{Q}_t
}\dv{\tau}\,.
\end{align*}
We deduce from~\bref{viscosity}, \eqref{f1}, \eqref{f2} and the foregoing formula that
for $i=1$, $2$ and a.e.\ in $\ts{T}$,
\begin{align}\label{vis3}
\begin{aligned}
  |\p_t(\nu(Q_1))-\p_t(\nu(Q_2)))|
&\, \leq C(R) \barg{ |{\bar{Q}}| +  |\dt{\bar{Q}}|  }\,, \\
  |\p_t(\nu(Q_i))-\p_t(\nu(Q_0))|
&\, \leq C(R)\barg{ |Q_{i}-Q_{0}| + |\dtind{Q}{i}-\dtind{Q}{0}| }\,.
\end{aligned}
\end{align}
Finally note that
\begin{align*}
 \| \dtind{Q}{i} \|_{L^\infty(\ts{T})} \leq C \| Q_i\|_{X_0}\,.
\end{align*}

%
% We start with the term
% \begin{equation}\label{nonline1}
%   P_{\sigma}\div\big((\nu(Q)-\nu(Q_0))\Lsym{u}\big).
% \end{equation}
%
%

\medskip

\noindent
\textit{Step 3: Estimates for differences of viscous stress tensor.}
We verify  the estimate
\begin{equation}\label{vis9}
  \begin{split}
    &\bnorm{ \bigl\llbracket
P_{\sigma}\div\barg{ (\nu(Q)-\nu(Q_0)) \Lsym{u} } \bigr\rrbracket }_{H^1(\VNSprime)}
\leq C(\nu,R)T^{\frac {1}{8}}\|(\overline{u},\overline{Q})\|_{X_u\times X_Q}\,.
  \end{split}
\end{equation}
To this end we rewrite this expression as
\begin{equation*}%\label{vis4}
  \begin{split}
    &\left\| P_{\sigma} \div\bsqb{ (\nu(Q_1)-\nu(Q_0))\Lsym{u_1} -
% }-P_{\sigma}\div\barg{
(\nu(Q_2)-\nu(Q_0)) \Lsym{u_2} } \right\|_{H^1(\VNSprime)}\\
    &{ }\leq \left\|\div \bsqb{ (\nu(Q_1)-\nu(Q_2))\Lsym{u_1} +
% }-P_{\sigma}\div\barg{
(\nu(Q_2)-\nu(Q_0))\Lsym{\bar{u}} }\right\|_{H^1(H^{-1})}\\
    & { }\leq \|(\nu(Q_1)-\nu(Q_2))\Lsym{u_1}\|_{H^1(L^2)}
+\|(\nu(Q_2)-\nu(Q_0))\Lsym{\bar{u}}\|_{H^1(L^2)}\,,
  \end{split}
\end{equation*}
Expressing these norms as $L^2(L^2)$-norms of the functions and their first order derivative in time leads to 
four higher-order and two lower-order terms.
For the higher-order terms we find by \eqref{vis3} and \eqref{f1}
\begin{align*}%\label{vis5}
   \|\p_t(\nu(Q_1)-\nu(Q_2))\Lsym{u_1}\|_{L^2(\Omega_T)}
    \leq & C(R) \norm{ (|\bar{Q}| + |\dt{\bar{Q}}|) \, |\Lsym{u_1}| }_{L^2(\Omega_T)}\\
    \leq & C(R)\|{\bar{Q}}\|_{H^1(L^{\infty})}\|\Lsym{u_1}\|_{L^{\infty}(L^2)}
% \\    \leq &
\leq C(R)T^{\frac {1}{8}}\|\bar{Q}\|_{X_0}\,,
\end{align*}
and analogously
\begin{equation*}%\label{vis6}
  \begin{split}
    \|\p_t(\nu(Q_i)-\nu(Q_0))D\bar{u}\|_{L^2(\Omega_T)}
    \leq & C(R) \|(|Q_i-Q_0|+|\p_t(Q_i-Q_0)|\,|D\bar{u}|\|_{L^2(\Omega_T)}\\
    \leq &C(R)\|Q_i-Q_0\|_{H^1(L^{\infty})}\|D\bar{u}\|_{L^{\infty}(L^2)}\\
     \leq &C(R)T^{\frac {1}{8}}\|\bar{u}\|_{X_u}.
  \end{split}
\end{equation*}
We obtain for the remaining two higher-order terms by \eqref{f1}, \eqref{f2} and \eqref{nuest1}
\begin{equation*}%\label{vis7}
  \begin{split}
    \|(\nu(Q_1)-\nu(Q_2))\dt{\Lsym{u_{1}}}\|_{L^2(\Omega_T)}
    \leq & C(R)\|\bar{Q}\|_{L^{\infty}(\Omega_T)}\|\dt{\Lsym{u_{1}}}\|_{L^2(\Omega_T)}\\
     \leq & C(R)\|\bar{Q}\|_{L^{\infty}(\Omega_T)}\barg{ \|u_1-u_0\|_{X_u}
+\|\dt{\Lsym{u_{0}}}\|_{L^2(\Omega_T)} }\\
    \leq & C(R)T^{\frac {1}{8}}\|\bar{Q}\|_{X_Q}
  \end{split}
\end{equation*}
and
\begin{align*}%\label{vis8}
\|(\nu(Q_2)-\nu(Q_0))\dt{\Lsym{\bar{u}}}\|_{L^2(\Omega_T)}
\leq & C(R)T^{\frac {1}{8}}\|\dt{\Lsym{\bar{u}}}\|_{L^2(\Omega_T)}
% \\
\leq C(R)T^{\frac {1}{8}}\|\bar{u}\|_{X_u}\,.
\end{align*}
To estimate the lower-order terms in the $H^1(L^2)$-norm we
use \eqref{nuest1} and obtain
\begin{align*}%\label{lowest1}
&\|(\nu(Q_1)-\nu(Q_2))\Lsym{u_1}\|_{L^2(L^2)}
+\|(\nu(Q_2)-\nu(Q_0))\Lsym{\bar{u}}\|_{L^2(L^2)}\\
\leq & C(R)T^{\frac {1}{8}}
\barg{ \|\bar{Q}\|_{X_Q}\|\Lsym{u_1}\|_{L^2(L^2)}
+\|Q_2-Q_0\|_{X_Q}\|\Lsym{\bar{u}}\|_{L^2(L^2)} }
% \\
\leq  C(R)T^{\frac {1}{8}}\|(\bar{u},\bar{Q})\|_{X_u\times X_Q}\,.
\end{align*}
The combination of the foregoing estimates implies the assertion of this step.

\medskip

\noindent
\textit{Step 4: Fundamental estimates for bilinear forms.}
Suppose that $P_1$, $P_2$ are
time dependent tensor fields with initial value $P_0$, that $\bar P = P_1-P_2$, and that
$B:\M^{d\times d}\times \M^{d\times d} \to \M^{d\times d}$
is a bilinear form with constant coefficients. Then
\begin{align*}
 & \norm{ \llbracket B(Q-Q_0, P) \rrbracket }_{H^1(L^2)} \\
&\,\quad \leq
C T^{\frac{1}{8}}  R\barg{
\norm{ \bar P }_{L^\infty(L^2)} + \norm{ \bar P}_{H^1(L^2)} )}
% + C(R, \|P_0\|)
+ CT^{\frac{1}{8}} \norm{\bar Q}_{X_Q}
\barg{ \norm{ P_2}_{L^\infty(L^2)} + \norm{ {P_2}}_{H^1(L^2)} }
\end{align*}
where we assume that all norms on the right-hand side are finite.
In particular,% \textbf{check constants}
\begin{align*}
 \norm{ \llbracket B(Q-Q_0, P) \rrbracket }_{H^1(L^2)}
\leq C(R) T^{\frac{1}{8}}\,\norm{ (\bar u, \bar Q) }_{X_0}
\end{align*}
for $P\in \bset{ \nabla u, \Lsym{u}, \Lskew{u}, \Delta Q }$.
%\begin{align*}
% \norm{ \llbracket B(Q-Q_0, P) \rrbracket }_{H^1(L^2)}
%\leq C(R) T^{\frac{1}{8}}\,\norm{ (\bar u, \bar Q) }_{X_0}\,.
%\end{align*}
In fact, by the triangle inequality and the product rule for bilinear forms,
\begin{align*}
 &\, \norm{ B(Q_1-Q_0, P_1) - B(Q_2-Q_0, P_2)}_{H^1(L^2)}\\
&\, \qquad \leq \norm{ B(Q_1-Q_0, \bar P)}_{H^1(L^2)}
+ \norm{ B(\bar Q, P_2)}_{H^1(L^2)}\\
&\,\qquad  \leq
\norm{ B(\dtind{Q}{1}-\dtind{Q}{0}, \bar P)}_{L^2(L^2)}
+\norm{ B(Q_1-Q_0, \dt{\bar P})}_{L^2(L^2)}
+\norm{ B(Q_1-Q_0, \bar P)}_{L^2(L^2)}
\\
&\, \qquad \quad
+\norm{ B(\dt{\bar Q}, P_2)}_{L^2(L^2)}
+\norm{ B(\bar Q, \dtind{P}{2})}_{L^2(L^2)}
+\norm{ B(\bar Q, P_2)}_{L^2(L^2)}\\
&\, \qquad  \leq C
\bset{
\norm{ \dtind{Q}{1}-\dtind{Q}{0}}_{L^2(L^\infty)}
\norm{ \bar P }_{L^\infty(L^2)}
+\norm{ Q_1-Q_0}_{L^\infty(\ts{T})} \norm{ \bar P}_{H^1(L^2)}
%+\norm{ Q_1-Q_0}_{L^\infty(\ts{T}} \norm{ \bar P}_{L^2(\ts{T})}
\\ &\, \qquad  \quad
+\norm{ \dt{\bar Q} }_{L^2(L^\infty)} \norm{ P_2}_{L^\infty(L^2)}
+\norm{ \bar Q}_{L^\infty(\ts{T})} \norm{ {P_2}}_{H^1(L^2)} }\,.
\end{align*}
The assertion follows from \bref{f1} and \bref{f2}.

\medskip

\noindent 
\textit{Step 5: Fundamental estimates for trilinear forms.}
Suppose that $P_1$, $P_2$ are
time dependent tensor fields with initial values $P_0$ and that
$E:\M^{d\times d}\times \M^{d\times d}\times \M^{d\times d}
\to \M^{d\times d}$
is a trilinear form with constant coefficients. Then %\textbf{check $\|P_0\|$}
\begin{align*}
 & \norm{ \llbracket E(Q,Q, P) - E(Q_0,Q_0, P) \rrbracket }_{H^1(L^2)} \\
&\,\quad \leq
C T^{\frac{1}{8}}  R\barg{
\norm{ \bar P }_{L^\infty(L^2)} + \norm{ \bar P}_{H^1(L^2)} )}
+ C(R, \|P_0\|_{L^2}) T^{\frac{1}{8}} \norm{\bar Q}_{X_Q}
\barg{ \norm{ {P_2}}_{H^1(L^2)} +\norm{ P_0}_{L^2}  }
\end{align*}
where we assume that all norms on the right-hand side are finite.
In particular, 
\begin{align*}
 \norm{ \llbracket E(Q,Q, P) - E(Q_0,Q_0, P) \rrbracket }_{H^1(L^2)}
\leq C(R) T^{\frac{1}{8}}\,\norm{ (\bar u, \bar Q)}_{X_0}
\end{align*}
for $P\in \bset{ \nabla u, \Lsym{u}, \Lskew{u},\Delta Q }$.
%\begin{align*}
% \norm{ \llbracket E(Q,Q, P) - E(Q_0,Q_0, P) \rrbracket }_{H^1(L^2)}
%\leq C(R) T^{\frac{1}{8}}\,\norm{ (\bar u, \bar Q)}_{X_0} \,.
%\end{align*}
To see this, note that
\begin{align*}
&\, E(Q_1,Q_1, P_1) - E(Q_0,Q_0, P_1) - E(Q_2,Q_2, P_2) + E(Q_0,Q_0, P_2)  \\
 &\,\qquad =  E(Q_1,Q_1, \bar P) -   E(Q_0,Q_0, \bar P)
+E(Q_1,Q_1, P_2)    - E(Q_2,Q_2, P_2) \\
 &\,\qquad =  E(Q_1-Q_0,Q_1, \bar P) +   E(Q_0,Q_1-Q_0, \bar P)
+E(Q_1-Q_2,Q_1, P_2)    + E(Q_2,Q_1-Q_2, P_2)\,.
\end{align*}
We need to estimate this sum in the $H^1(L^2)$-norm. By H\"older's inequality and the
product rule for trilinear forms, each term leads to four terms that need to be
estimated in $L^2(L^2)$. For the first term we find
\begin{align*}
 &\, \norm{ E(Q_1-Q_0,Q_1, \bar P) }_{H^1(L^2)}\\
&\,\quad \leq \norm{ E(\dtind{Q}{1}-\dtind{Q}{0},Q_1, \bar P)
+ E(Q_1-Q_0,\dtind{Q}{1}, \bar P)
+ E(Q_1-Q_0,Q_1, \dt{\bar P})  }_{L^2(L^2)}
\\ &\,\quad \qquad
+\norm{ E(Q_1-Q_0,Q_1, \bar P) }_{L^2(L^2)} \\
&\,\quad \leq C\bset{
\norm{\dtind{Q}{1}-\dtind{Q}{0}}_{L^2(L^\infty)} \norm{Q_1}_{L^\infty(\ts{T})}
\norm{\bar P}_{L^\infty(L^2)}
+\norm{Q_1-Q_0}_{L^\infty(\ts{T})} \norm{\dtind{Q}{1}}_{L^2(L^\infty)}
\norm{\bar P}_{L^\infty(L^2)}\\
&\,\qquad + \norm{Q_1-Q_0}_{L^\infty(\ts{T})} \norm{Q_1}_{L^\infty(\ts{T})}
\norm{\bar P}_{H^1(L^2)}  }\,.
\end{align*}
The second term can be estimated the same way. For the third  term one finds
\begin{align*}
 &\, \norm{ E(Q_1-Q_2,Q_1, P_2) }_{H^1(L^2)}\\
&\,\quad \leq \norm{ E(\dtind{Q}{1}-\dtind{Q}{2},Q_1, P_2)
+ E(Q_1-Q_2,\dtind{Q}{1}, P_2)
+ E(Q_1-Q_2,Q_1, \dtind{P}{2})}_{L^2(L^2)}
\\ &\,\quad \qquad
  +\norm{ E(Q_1-Q_2,Q_1, P_2) }_{L^2(L^2)} \\
&\,\quad \leq C \bset{
\norm{\dtind{Q}{1}-\dtind{Q}{2}}_{L^2(L^\infty)} \norm{Q_1}_{L^\infty(\ts{T})}
\norm{P_2}_{L^\infty(L^2)}
+\norm{Q_1-Q_2}_{L^\infty(\ts{T})} \norm{\dtind{Q}{1}}_{L^2(L^\infty)}
\norm{P_2}_{L^\infty(L^2)}\\
&\,\qquad + \norm{Q_1-Q_2}_{L^\infty(\ts{T})} \norm{{Q}_{1}}_{L^\infty(\ts{T})}
\norm{P_2}_{H^1(L^2)}  }.
\end{align*}
The fourth term can be estimated as before.

\medskip

\noindent
\textit{Step 6: Estimates for additional stress tensors in the fluid equation.}
 We have
\begin{align*}%\label{longestimate}
  \bnorm{ \llbracket P_\sigma \diverg \bset{\sigma(Q-Q_0,\Delta Q)
+\xi\barg{ \tau_2(Q,\Delta Q)-\tau_2(Q_0,\Delta Q) } } \rrbracket
%\big|_{Q_2}^{Q^1}
}_{H^1(\VNSprime)}
\leq C(R)T^{\frac{1}{8}}\|\bar{Q}\|_{X_Q}\,.
\end{align*}
This follows for $\sigma(Q-Q_0,\Delta Q)$ and the bilinear part in
$\tau_2(Q,\Delta Q)-\tau_2(Q_0,\Delta Q)$ from Step~4 and for the trilinear part
$ Q\tr(Q\Delta Q)-Q_0\tr(Q_0\Delta Q)$ from Step~5.

\medskip

\noindent
\textit{Step 7: The coupling term in the evolution of the tensor field.} We have
\begin{align*}
\bnorm{ \big\llbracket S_1(\nabla u,Q)+\xi S_2(\nabla u,Q)
\bigr\rrbracket }_{H^1(\VNSprime)}
\leq   C(R)T^{\frac{1}{8}}\|(\bar{u},\bar{Q})\|_{X_u\times X_Q}\,.
\end{align*}
This follows for the bilinear part in
$S_1(\nabla u,Q)+\xi S_2(\nabla u,Q)$ from Step~4 and for the trilinear part
$ Q\tr(Q\nabla u)-Q_0\tr(Q_0\nabla u)$ from Step~5.

\medskip

\noindent
\textit{Step 8: Additional lower-order terms.}
The terms $u\otimes u$, $(u\cdot\nabla) Q+L(Q)$ and
\begin{align*}
 \mathcal{J}(Q)
=P_{\sigma}\div\bsqb{ \tau_1(Q)+\sigma(Q,L(Q))+\xi\tau_2(Q,L(Q))
-\frac{2\xi}{d}L(Q)}
\end{align*}
are of lower-order and lead to the estimates
\begin{equation*}\label{lowerorderestim}
 \begin{split}
   \|\mathcal{J}(Q_1)-\mathcal{J}(Q_2)\|_{Y_u}
  &\, \leq T^{\frac{1}{8}}C(R)\|\bar{Q}\|_{X_Q} \,, \\
\|\div \barg{ u_1\otimes u_1-  u_2\otimes u_2 }\|_{Y_u}
&\, \leq T^{\frac{1}{8}} C(R)\|\bar{u}\|_{X_u}\,, \\
\|(u_1\cdot \nabla) Q_1 +L(Q_1)-(u_2\cdot \nabla) Q_2 -L(Q_2)\|_{Y_Q}
&\, \leq T^{\frac{1}{8}} C(R)\|(\bar{u},\bar{Q})\|_{X_u\times X_Q}\,.
 \end{split}
\end{equation*}
These estimates can be done the same way as in Step 4 and 5.
%\textbf{check that nobody was left behind.}

\medskip

\noindent
\textit{Proof of (iii): Boundedness of $\mathcal{N}_0$.}

% In the last step, we use the compatibility condition in $\Zspace$
% and $\p_t Q_D|_{\Gamma_D}\equiv 0$.
If suffices to show
that \eqref{bounded} is a consequence of \eqref{lip}.
In fact, the choice of $(u_2,Q_2)=0$ in \eqref{lip} implies
\begin{equation}
\|\mathcal{N}_0(Q_0)(u_1,Q_1)-\mathcal{N}_0(Q_0)(0,0)\|_{Y_0}
\leq C_{\mathcal{N}_0}(T,R)\|(u_1,Q_1)\|_{Y_0}
\end{equation}
and the assertion follows by  the triangle inequality
since $\mathcal{N}_0(Q_0)(0,0) = \mathcal{E}(u_0, Q_0)$, see the proof of (i).

\medskip

\noindent
\textit{Proof of (iv): Asymptotic behaviour of the constant.} This assertion
follows from the scaling of the constants in step (ii) in $T$.

\end{proof}

\subsection{Proof of Theorem~\ref{localstrong}}\label{finallyproof}

By \eqref{trans} and \eqref{newnon}, the proof of Theorem \ref{localstrong}
can be reduced to the statement that the nonlinear mapping
\begin{equation}
  \mathscr{L}(Q_0):=\mathcal{L}^{-1}(Q_0)\mathcal{N}_0(Q_0):X_0\to X_0
\end{equation}
has a unique fixed-point. By~\bref{stronginverse} and \eqref{lip} we find for all
$(\uhi,\Qhi)\in \BXzeroR$ that
\begin{align*}
&\bnorm{ \mathcal{L}^{-1}(Q_0)\mathcal{N}_0(Q_0) (\uhone,\Qhone)
- \mathcal{L}^{-1}(Q_0)\mathcal{N}_0(Q_0)(\uhtwo,\Qhtwo) }_{X_0}
\\ &\, \qquad
  \leq  C_{\mathcal{L}}\|\mathcal{N}_0(Q_0)
  (\uhone,\Qhone)-\mathcal{N}_0(Q_0)(\uhtwo,\Qhtwo)\|_{Y_0}
\\ &\, \qquad
  \leq  C_{\mathcal{L}}C_{\mathcal{N}_0}(T,R,Q_0)
  \|(\uhone-\uhtwo,\Qhone-\Qhtwo)\|_{X_0}\,.
\end{align*}
Therefore $\mathscr{L}(Q_0)$ is a contraction mapping for $T\ll 1$.
A similar argument shows  that $\mathscr{L}$ maps $\BXzeroR$ into itself. In fact, by
by \eqref{bounded}
\begin{align*}
\bnorm{ \mathscr{L}(Q_0)(\uhone, \Qhone) }_{X_0}
&\leq C_{\mathcal{L}} \bnorm{ \mathcal{N}_0(Q_0)(\uhone,\Qhone) }_{Y_0}\\
&\leq C_{\mathcal{L}}\barg{ C_{\mathcal{N}}(T,R,Q_0)\|(\uhone,\Qhone)\|_{X_0}+
\|\mathcal{E}(u_0,Q_0)\|_{Y_0}}
\end{align*}
and this estimate allows us to fix $R \gg 1$ large enough
and $T \ll 1$ small enough in such a way that
% to ensure that
% $C_{\mathcal{L}}\CR(u_0,Q_0)\leq \frac R2$. Then
\begin{align*}
      \bnorm{ \mathscr{L}(Q_0)(\uhone,\Qhone) }_{X_0}
&\leq C_{\mathcal{L}}
C_{\mathcal{N}_0}(T,R)\|(\uhone,\Qhone)\|_{X_0} + \frac R2 \leq R\,.
    \end{align*}
We conclude from Banach's fixed-point theorem that $\mathscr{L}$ possess a
unique fixed-point $(\uh,\Qh)\in X_0$ and this fixed-point is
a  solution of the system \eqref{strongformeqn} subject to \eqref{initialconditions}
and \eqref{mixedboundaryconditions}.

The argument implies the uniqueness as well. Suppose that
there was another solution $(\hat{u}_h,\hat{Q}_h)$ in
$\BXzeroRone$ with $R_1>R$. Choose  $\hat{T}\leq T$ and repeat the above argument
to show the uniqueness of fixed-points of $\mathscr{L}$, which implies $(\uh,\Qh)=(\hat{u}_h,\hat{Q}_h)$
on $(0,\hat{T})\times\O$. Then the uniqueness follows by the continuity argument.\qed

%\bibliographystyle{acm}
%\bibliography{cfs}

\end{document}